\newtheorem{theorem}{Theorem}
\newtheorem{corollary}{Corollary}
\newtheorem{lemma}{Lemma}
\newcommand{\p}{\Bbb{P}}
\newcommand{\e}{\Bbb{E}}
\newcommand{\R}{\mathbb{R}}
\newcommand{\ud}{\mathrm{d}}
\title{
\textbf{Occupation times of refracted
L\'evy processes.}}
\author{\textbf{ A.E. Kyprianou\footnote{$^{,\S}$Department of Mathematical Sciences, University of Bath, Claverton Down, {\sc Bath, BA2 7AY, United Kingdom}. Email: a.kyprianou@bath.ac.uk, \, jcpm20@bath.ac.uk} \footnote{Corresponding author.}\,, J.C. Pardo\footnote{Centro de Investigaci\'on en Matem\'aticas A.C. Calle Jalisco s/n. C.P. 36240, {\sc Guanajuato, Mexico.}  Email: jcpardo@cimat.mx}\,, J. L. P\'erez\footnote{Department of Statistics, ITAM. Rio Hondo No. 1, Col. Progreso Tizap\'an, C.P. 01080 {\sc Mexico, D.F., Mexico.} Email: jose.perez@itam.mx}, }}
\date{\footnotesize This version: \today}
\begin{document}

\maketitle

\begin{abstract}
\bigskip
A refracted L\'evy process is a L\'evy process whose dynamics change by subtracting off a fixed linear drift (of suitable size) whenever the aggregate process is above a pre-specified level. More precisely, whenever it exists, a refracted L\'evy process is described by the unique strong solution to the stochastic differential equation
\[
\ud U_t=-\delta\mathbf{1}_{\{U_t>b\}}\ud t +\ud X_t,
\]
where $X=(X_t, t\ge 0)$ is a L\'evy process with law $\p$ and $b,\delta\in \R$ such that the resulting process $U$ may visit the half line $(b,\infty)$ with positive probability. In this paper, we consider the case that $X$ is spectrally negative and establish a number of identities for the following functionals
\[
\int_0^\infty\mathbf{1}_{\{U_t<b\}}\ud t, \quad\int_0^{\rho_a^+}\mathbf{1}_{\{U_t<b\}}\ud t, \quad\int_0^{\rho^-_c}\mathbf{1}_{\{U_t<b\}}\ud t, \quad\int_0^{\rho_a^+\land\rho^-_c}\mathbf{1}_{\{U_t<b\}}\ud t,
\]
where $\rho^+_a=\inf\{t\ge 0: U_t> a\}$  and $\rho^-_c=\inf\{t\ge 0: U_t< c\}$ for $c<b<a$. Our identities  extend recent results of Landriault et al. \cite{LRZ} and bear relevance to Parisian-type financial instruments and insurance scenarios.

\bigskip

\noindent {\sc Key words}: Occupation times, fluctuation theory, refracted L\'evy processes.\\
\noindent MSC 2000 subject classifications: 60J99.
\end{abstract}

\vspace{0.5cm}
\section{Introduction and main results.}

Let $X=(X_t, t\geq 0)$ be a L\'evy process defined on a  probability space $(\Omega, \mathcal{F}, \p)$.  For $x\in \R$ denote by $\p_x$ the law of $X$ when it is started at $x$ and write for convenience  $\p$ in place of $\p_0$. Accordingly, we shall write $\e_x$ and $\e$ for the associated expectation operators. In this paper we shall assume throughout that $X$ is \textit{spectrally negative} meaning here that it has no positive jumps and that it is not a subordinator. It is well known that the latter allows us to talk about the Laplace exponent $\psi(\theta):[0,\infty) \to \R$, i.e.
\[
\e\Big[{\rm e}^{\theta X_t}\Big]=:{\rm e}^{t\psi(\theta)}, \qquad t, \theta\ge 0,
\]
and the Laplace exponent is given by the L\'evy-Khintchine formula
\begin{equation}\label{lk}
\psi(\theta)=\gamma\theta+\frac{\sigma^2}{2}\theta^2+\int_{(-\infty,0)}\big({\rm e}^{\theta x}-1-\theta x\mathbf{1}_{\{x>-1\}}\big)\Pi(\ud x),
\end{equation}
where $\gamma\in \R$, $\sigma^2\ge 0$ and $\Pi$ is a measure on $(-\infty,0)$ called the L\'evy measure of $X$ and satisfies
\[
\int_{(-\infty,0)}(1\land x^2)\Pi(\ud x)<\infty.
\]
The reader is referred to Bertoin \cite{B} and Kyprianou \cite{K} for a complete introduction to the theory of L\'evy processes.

It is well-known that $X$ has paths of bounded variation if and only if $\sigma^2=0$ and $\int_{(-1, 0)} x\Pi(\mathrm{d}x)$ is finite. In this case $X$ can be written as
\begin{equation}
X_t=ct-S_t, \,\,\qquad t\geq 0,
\label{BVSNLP}
\end{equation}
where $c=\gamma-\int_{(-1,0)} x\Pi(\mathrm{d}x)$ and $(S_t,t\geq0)$ is a driftless subordinator. Note that  necessarily $c>0$, since we have ruled out the case that $X$ has monotone paths. In this case its Laplace exponent is given by
\begin{equation*}
\psi(\lambda)= \log \mathbb{E} \left[ \mathrm{e}^{\lambda X_1} \right] = c \lambda-\int_{(-\infty,0)}\big(1- {\rm e}^{\lambda x}\big)\Pi(\ud x)
\end{equation*}

In this paper, we study occupation times of a spectrally negative L\'evy processes when its path are perturbed in a simple way. Informally speaking, a linear drift at rate $\delta>0$ is subtracted from the increments of $X$ whenever it exceeds a pre-specified positive level $b>0$. More formally, we are interested in the process $U$ which is a solution to the stochastic differential equation given by
\begin{equation}\label{SDE}
\ud U_t=\ud X_t-\delta\mathbf{1}_{\{U_t>b\}}\ud t, \qquad t\geq 0.
\end{equation}
In order to work with the above process we make the following
assumption
\begin{equation}
\mathrm{({\bf H})}\qquad\delta<\gamma-\int_{(-1,0)} x\Pi(\mathrm{d}x),\qquad\text{if $X$ has paths of bounded variation.}\notag
\end{equation}
According to Kyprianou and Loeffen \cite{KL}, this ensures that a strong solution to (\ref{SDE}) exists and the path of $U$ is not monotone.

 The special case of $X$ given in  (\ref{BVSNLP}) with compound Poisson jumps described above may also be seen as an example of a Cram\'er-Lundberg process as soon as $\mathbb{E}(X_1)>0$. This provides a specific motivation for the study of the dynamics of (\ref{SDE}). Indeed very recent studies of problems related to ruin in insurance risk has seen some preference to working with general spectrally negative L\'evy processes in place of the classical Cram\'er-Lundberg process (which is itself an example of the former class). See for example \cite{APP2007,F1998, HPSV2004a,  HPSV2004b, KKM2004, KK2006, KP2007, RZ2007, SV2007}. Under such a general model, the solution to the stochastic differential equation (\ref{SDE}) may now be thought of as the aggregate of the insurance risk process when dividends are paid out at a rate $\delta$ whenever it exceeds the level $b$.

In this paper, we consider a number of occupation  identities for the refracted process $U$, namely the following functionals
\begin{equation}
\int_0^\infty\mathbf{1}_{\{U_t<b\}}\ud t, \quad\int_0^{\rho_a^+}\mathbf{1}_{\{U_t<b\}}\ud t, \quad\int_0^{\rho^-_c}\mathbf{1}_{\{U_t<b\}}\ud t, \quad\int_0^{\rho_a^+\land\rho^-_c}\mathbf{1}_{\{U_t<b\}}\ud t,
\label{fourfunctionals}
\end{equation}
where 
\[
\rho^+_a=\inf\{t\ge 0: U_t> a\} \text{ and }\rho^-_c=\inf\{t\ge 0: U_t< c\},
\]
 for $c<b<a$. Our identities  extend recent results of Landriault et al. \cite{LRZ} where it is explained how such functionals bear relevance to, so-called, insurance risk models with Parisian implementing delays.  Indeed suppose that dividends are paid at rate $\delta$ from a surplus process $X$, modelled as a spectrally negative L\'evy process, whenever the aggregate is positive valued. In that case, the refracted L\'evy process, U, given by (\ref{SDE}) with $b=0$ plays the role of the aggregate surplus process. A Parisian-style ruin problem would declare the insurance company ruined if it remained with a negative surplus for too long. To be specific, at each time the refracted surplus process goes negative, an independent exponential clock with rate $q$ is started. If the clock rings before the refracted surplus becomes positive again then the insurance company is ruined. Assuming that the refracted process drifts to $+\infty$, and the initial value of the surplus is $x>0$, the probability of ruin can now be identified as 
 \[
 1- \mathbb{E}_x\left(\exp\left\{  - \int_0^\infty \mathbf{1}_{\{U_t<0\}}{\rm d}t\right\}\right).
 \] 
 See  \cite{LRZ-0} for further discussion.

A key element of the forthcoming analysis relies on the theory of so-called scale functions for spectrally negative L\'evy processes. We therefore devote some time in this section reminding the reader of some fundamental properties of scale functions as well as their relevance to refraction strategies.

For
each $q\geq0$ define  $W^{(q)}:
\R\to [0, \infty),$ such that $W^{(q)}(x)=0$ for all $x<0$ and on $(0,\infty)$ is the unique continuous function with Laplace transform
\begin{eqnarray}
\int^{\infty}_0\mathrm{e}^{-\theta x}W^{(q)}(x)dx=\frac1{\psi(\theta)-q},
\qquad \theta>\Phi(q),
\label{scaleLT}
\end{eqnarray}
where $ \Phi(q) = \sup\{\lambda \geq 0: \psi(\lambda) = q\}$ which is well defined and finite for all $q\geq 0$, since $\psi$ is a strictly convex function satisfying $\psi(0) = 0$ and $\psi(\infty) = \infty$. For convinience, we write $W$ instead of $W^{(0)}$. Associated to the functions $W^{(q)}$ are the functions $Z^{(q)}:\R\to[1,\infty)$ defined by
\[
Z^{(q)}(x)=1+q\int_{0}^x W^{(q)} (y)\ud y,\qquad q\ge 0.
\]
Together, the functions $W^{(q)}$ and $Z^{(q)}$ are collectively known as $q$-scale functions and predominantly appear in almost all fluctuations identities for spectrally negative L\'evy processes.  

When $X$ has paths of bounded variation, without further assumptions, it can only be said that the function $W^{(q)}$ is almost everywhere differentiable on $(0,\infty)$. However, in the case that $X$ has paths of unbounded variation, $W^{(q)}$ continuously differentiable on $(0,\infty)$; cf. Chapter 8 in \cite{K}. Throughout this text we shall write $W^{(q)\prime}$ to mean the well defined derivative in the case of unbounded variation paths and a version the density of $W^{(q)}$ with respect to Lebesgue measure in the case of bounded variation paths. This should cause no confusion as, in the latter case, $W^{(q)\prime}$ will accordingly only appear inside  Lebesgue integrals.

\bigskip


We complete this section by stating our main results for the occupations measures mentioned in (\ref{fourfunctionals}). Let us note that all the identities we present essentially follow from the first identity in Theorem \ref{main} for occupation up to the stopping time $\rho_a^+\land\rho^-_c$, where $a<b<c$, by taking limits as $a\downarrow -\infty$ and $c\uparrow\infty$. We present all our results under the measure $\p_b$, that is to say, when $U$ is issued from the barrier $b$. 

In what follows we recall that, for each $q\geq 0$, $W^{(q)}$ is the $q$-scale function associated to $X$, however, we shall also write $\mathbb{W}^{(q)}$ for the $q$-scale function associated to the spectrally negative L\'evy process with Laplace exponent $\psi(\theta) - \delta\theta$, $\theta\geq 0$.  We shall also write $\varphi$ for the right inverse of this Laplace exponent;
 that is to say
\begin{equation}
\varphi(q)=\sup\{\theta>0: \psi(\theta)-\delta\theta=q\},\notag
\end{equation}
for $q\geq 0$.

\begin{theorem}\label{main} Fix $\theta\geq 0$.
For $a<b<c$ we have 
\begin{equation}\label{ui}
\begin{split}
\mathbb{E}_b\bigg[\exp&\left\{-\theta\int_0^{\rho_{c}^+\wedge\rho_{a}^-}\mathbf{1}_{\{U_s<b\}}{\rm d}s\right\}\bigg]\\
&\hspace{1cm}=\frac{1/\mathbb{W}(c-b)+\displaystyle\frac{\sigma^2}{2}\mathcal{C}^{(\theta)}(a,b)+\displaystyle\int_0^{\infty}\int_{(-\infty,0)}{\mathcal A}^{(\theta)}(z,a,b,c,y)\Pi(dz-y){\rm d}y}{(\psi'(0+)-\delta)^++\displaystyle\frac{\sigma^2}{2}\mathcal{D}^{(\theta)}(a,b,c)+\displaystyle\int_0^{\infty}\int_{(-\infty,0)}\mathcal{B}^{(\theta)}(z,a,b,c,y)\Pi(dz-y){\rm d}y}.
\end{split}
\end{equation}
Where
\begin{align}
{\mathcal A}^{(\theta)}(z,a,b,c,y)&=
\frac{\mathbb{W}(c-b-y)}{\mathbb{W}(c-b)}\left(Z^{(\theta)}(z+b-a)-Z^{(\theta)}(b-a)\frac{W^{(\theta)}(z+b-a)}{W^{(\theta)}(b-a)}\right)\mathbf{1}_{(a-b,0)}(z)\mathbf{1}_{(0,c-b)}(y),\notag\\
\mathcal{B}^{(\theta)}(z, a,b, c,y)&=
{\rm e}^{-\varphi(0)y}-\frac{\mathbb{W}(c-b-y)}{\mathbb{W}(c-b)}\frac{W^{(\theta)}(z+b-a)}{W^{(\theta)}(b-a)}\mathbf{1}_{(a-b,0)}(z)\mathbf{1}_{(0,c-b)}(y),\notag\\
\mathcal{C}^{(\theta)}(a,b)&=Z^{(\theta)}(b-a)\frac{W^{(\theta)\prime}(b-a)}{W^{(\theta)}(b-a)} - \theta
W^{(\theta)}(b-a),\notag\\
\mathcal{D}^{(\theta)}(a,b,c)&=\frac{\mathbb{W}'(c-b)}{\mathbb{W}(c-b)}+\frac{W^{(\theta)\prime}(b-a)}{W^{(\theta)}(b-a)}-\varphi(0).\notag
\end{align}
\end{theorem}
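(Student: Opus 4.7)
Let $f(x):=\mathbb{E}_x[\exp(-\theta V)]$ with $V=\int_0^{\rho_c^+\wedge\rho_a^-}\mathbf{1}_{\{U_s<b\}}\,\ud s$, so the theorem asks for $f(b)$. The plan is to compute $f$ separately on $[a,b]$, where $U\equiv X$, and on $(b,c]$, where $U\equiv Y:=X-\delta t$ (of Laplace exponent $\psi(\theta)-\delta\theta$ and scale functions $\mathbb{W}^{(q)}$), and then to glue at the barrier $b$ in order to isolate the single unknown $f(b)$.

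On $[a,b]$, since $X$ is spectrally negative, $X_{\tau_b^+}=b$ on $\{\tau_b^+<\tau_a^-\}$, and the strong Markov property at $\tau_a^-\wedge\tau_b^+$ combined with the classical two-sided exit identities for $X$ yields the affine representation
\[
f(x)=\bigl[Z^{(\theta)}(x-a)-Z^{(\theta)}(b-a)\tfrac{W^{(\theta)}(x-a)}{W^{(\theta)}(b-a)}\bigr]+f(b)\tfrac{W^{(\theta)}(x-a)}{W^{(\theta)}(b-a)},\qquad x\in[a,b],
\]
together with $f\equiv 1$ on $(-\infty,a]$. On $(b,c]$ there is no killing; applying the strong Markov property at $\tau_b^-\wedge\tau_c^+$ for $Y$ and invoking the triple law for spectrally negative L\'evy processes (e.g.\ Theorem~8.7 of \cite{K}), with killed potential density $u^Y_{[b,c]}(x,b+y)=\mathbb{W}(x-b)\mathbb{W}(c-b-y)/\mathbb{W}(c-b)-\mathbb{W}(x-b-y)$ and the Brownian boundary term of weight $\sigma^2/2$ for continuous crossings of $b$, gives an expression for $f(x)$ as the sum of the exit probability through $c$, a $\sigma^2/2$-weighted $f(b)$-term, and a double integral against $\Pi(\ud z-y)\ud y$ involving $f(b+z)$. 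Substituting the affine formula for $f(b+z)$ on $\{z\in(a-b,0)\}$ and $f\equiv 1$ on $\{z<a-b\}$ produces a closed integral equation for $f$ on $(b,c]$ in which $f(b)$ is the only unknown.

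Solving this equation for $f(b)$ (via the rearrangement and limit described in the next paragraph) yields the stated ratio after three auxiliary identities are invoked: $(Z^{(\theta)})'=\theta W^{(\theta)}$ produces the $-\theta W^{(\theta)}(b-a)$ summand of $\mathcal{C}^{(\theta)}$; the renewal-theoretic asymptotic $1/\mathbb{W}(\infty)=(\psi'(0+)-\delta)^+$ gives the drift-like summand of the denominator; and $\varphi(0)=\lim_{x\to\infty}\mathbb{W}'(x)/\mathbb{W}(x)$ gives the $-\varphi(0)$ contribution to $\mathcal{D}^{(\theta)}$. The $e^{-\varphi(0)y}$ factor inside $\mathcal{B}^{(\theta)}$ captures the contribution of $Y$-jumps out of $(b,c)$ landing below $a$, via the identity $\mathbb{P}_{b+z}^Y(\tau_a^-<\infty)=e^{-\varphi(0)(b+z-a)}$.

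The main obstacle will be the matching at $b$: directly setting $x=b$ in the integral equation reduces it to a tautology, because all non-$f(b)$ terms scale like $\mathbb{W}(x-b)$ in the unbounded-variation case (and the Brownian boundary contribution absorbs the remaining $f(b)$-coefficient via $\sigma^2\mathbb{W}'(0+)/2=1$). One therefore has to rearrange the equation so that the $f(b)$-coefficient is isolated on each side before passing to the limit $x\downarrow b$, treating the unbounded- and bounded-variation cases separately (the latter uses the non-vanishing $\mathbb{W}(0+)=1/(\gamma-\int_{(-1,0)}x\Pi(\ud x)-\delta)$ directly). Additionally, the Brownian continuous-crossing contribution must combine correctly with the derivative $W^{(\theta)\prime}(b-a)/W^{(\theta)}(b-a)$ coming from $X$ on $[a,b]$ in order to produce the symmetric form of $\mathcal{D}^{(\theta)}$.
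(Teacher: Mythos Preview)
Your route differs genuinely from the paper's. The paper never sets up a boundary-value problem for $f(x)=\mathbb{E}_x[\exp(-\theta V)]$; instead, in the bounded-variation case it exploits the irregularity of $b$ for $(-\infty,b)$ under $Y$ to decompose the path of $U$ started at $b$ into i.i.d.\ excursions, with the number of excursions confined to $[a,c]$ being geometric, and sums the resulting series. This is morally your linear equation $f(b)=N(b)+f(b)M(b)$ in different clothing, since $N/(1-M)=N\sum_k M^k$. For unbounded variation the paper then \emph{approximates}: first by a strongly approximating bounded-variation sequence (using uniform path convergence and continuity of scale functions in the L\'evy triplet) to reach the case $\sigma^2=0$; then by a further unbounded-variation sequence with L\'evy measures $\Pi_n=\Pi+n^2\sigma^2\delta_{-1/n}$, recovering $\mathcal{C}^{(\theta)}$ and $\mathcal{D}^{(\theta)}$ as limits of the integral terms. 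Your proposal to stay with the overshoot law and pass $x\downarrow b$ directly is a different line.

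Two points need repair. First, your account of the $e^{-\varphi(0)y}$ summand in $\mathcal{B}^{(\theta)}$ is incorrect: it does not encode jumps landing below $a$, and the claimed identity $\mathbb{P}^Y_{b+z}(\tau_a^-<\infty)=e^{-\varphi(0)(b+z-a)}$ is false (take $\psi'(0+)>\delta$, so $\varphi(0)=0$, yet the left side is $1-(\psi'(0+)-\delta)\mathbb{W}(b+z-a)<1$). In the paper this factor appears because the constant $1$ in the denominator is rewritten via
\[
1=\mathbb{W}(0+)(\psi'(0+)-\delta)^+ +\mathbb{W}(0+)\int_0^\infty\!\!\int_{(-\infty,0)} e^{-\varphi(0)y}\,\Pi(\ud z-y)\,\ud y,
\]
itself obtained from $\mathbb{W}(c-b-y)/\mathbb{W}(c-b)\to e^{-\varphi(0)y}$ as $c\uparrow\infty$ in the overshoot law; you will need the same device. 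Second, and more seriously, your resolution of the $x\downarrow b$ tautology is incomplete. For $\sigma^2>0$ you correctly anticipate that $\sigma^2\mathbb{W}'(0+)/2=1$ lets the creeping term carry the leading order, but for unbounded variation with $\sigma^2=0$ there is no creeping and $\mathbb{W}'(0+)=+\infty$; the leading order of $1-M(x)$ and $N(x)$ then comes entirely from the singular range $y\in(0,x-b)$ of the overshoot integral, which you do not analyse. Moreover, after dividing by $\mathbb{W}(x-b)$ you are left with $(f(x)-f(b))/\mathbb{W}(x-b)$, and disposing of it requires either a smooth-fit statement $f'(b+)=f'(b-)$ or an a priori regularity bound for $f$ at $b$, neither of which you supply. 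The paper's two-stage approximation is designed precisely to avoid ever working directly at $b$ in the unbounded-variation regime.
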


\begin{corollary} \label{corr1}Fix $\theta\geq 0$. 
\begin{itemize}
\item[(i)] For $c>b$, 
\[
\begin{split}
\mathbb{E}_b&\bigg[\exp\bigg\{-\theta\int_0^{\rho_{c}^+}\mathbf{1}_{\{U_s<b\}}\ud s\bigg\}\bigg]\\
&\hspace{.4cm}= \frac{1/\mathbb{W}(c-b)}{(\psi'(0+)-\delta)^++\displaystyle\frac{\sigma^2}{2}\left(\frac{\mathbb{W}'(c-b)}{\mathbb{W}(c-b)}+ \Phi(\theta) - \varphi(0)\right)+\displaystyle\int_0^{\infty}\int_{(-\infty,0)}\mathcal{B}^{(\theta)}(z, b, c,y)\Pi(\ud z-y)\ud y},
\end{split}
\]
where
\[
\mathcal{B}^{(\theta)}(z, b, c,y)=
{\rm e}^{-\varphi(0)y}-\frac{\mathbb{W}(c-b-y)}{\mathbb{W}(c-b)}{\rm e}^{\Phi(\theta)z}\mathbf{1}_{(0,c-b)}(y).
\]

\item[(ii)] For $a<b$,
\[
\begin{split}
\mathbb{E}_b&\bigg[\exp\bigg\{-\theta\int_0^{\rho_a^-}\mathbf{1}_{\{U_s<b\}}\ud s\bigg\}\bigg]\\
&\hspace{1.4cm}=\frac{(\psi'(0+)-\delta)^+ +\displaystyle \frac{\sigma^2}{2}\mathcal{C}^{(\theta)}(a,b)+\displaystyle\int_0^{c-b}\int_{(a-b,0)}A^{(\theta)}(z,a,b){\rm e}^{-\varphi(0)y}\Pi(\ud z-y)\ud y}{(\psi'(0+)-\delta)^+ + \displaystyle\frac{\sigma^2}{2}\frac{W^{(\theta)\prime}(b-a)}{W^{(\theta)}(b-a)}+\displaystyle\int_0^{\infty}\int_{(-\infty,0)}\mathcal{B}^{(\theta)}(z, a,b,y)\Pi(\ud z-y)\ud y},
\end{split}
\]
where 
\begin{align}
\mathcal{A}^{(\theta)}(z,a,b)&=Z^{(\theta)}(z+b-a)-Z^{(\theta)}(b-a)\frac{W^{(\theta)}(z+b-a)}{W^{(\theta)}(b-a)},\label{A-theta}\\
\mathcal{B}^{(\theta)}(z, a,b,y)&={\rm e}^{-\varphi(0)y}\left(1-\frac{W^{(\theta)}(z+b-a)}{W^{(\theta)}(b-a)}\mathbf{1}_{(a-b,0)}(z)\right).\notag
\end{align}

\end{itemize}
\end{corollary}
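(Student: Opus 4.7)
The plan is to obtain parts (i) and (ii) of Corollary \ref{corr1} as limits of the identity (\ref{ui}) in Theorem \ref{main}, sending $a\downarrow -\infty$ for part (i) and $c\uparrow\infty$ for part (ii). In both cases the left-hand side converges by bounded convergence: $\rho_a^-\uparrow\infty$ a.s.\ as $a\downarrow-\infty$ and $\rho_c^+\uparrow\infty$ a.s.\ as $c\uparrow\infty$, since the c\`adl\`ag paths of $U$ are bounded on every compact time interval, so $\rho_c^+\wedge\rho_a^-$ tends to $\rho_c^+$ (resp.\ $\rho_a^-$) and the exponential functional in (\ref{ui}) is dominated by $1$.

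To pass to the limit on the right-hand side, I rely on the standard large-argument asymptotics of scale functions: as $u\to\infty$, $W^{(\theta)\prime}(u)/W^{(\theta)}(u)\to\Phi(\theta)$, $W^{(\theta)}(u+z)/W^{(\theta)}(u)\to e^{\Phi(\theta)z}$ and $Z^{(\theta)}(u)/W^{(\theta)}(u)\to\theta/\Phi(\theta)$, with the analogous statements for $\mathbb{W}$ in which $\Phi$ is replaced by $\varphi$, together with $1/\mathbb{W}(c-b)\to(\psi'(0+)-\delta)^+$ as $c\to\infty$. In part (i), with $u=b-a$, these identify $\mathcal{D}^{(\theta)}(a,b,c)\to\mathbb{W}'(c-b)/\mathbb{W}(c-b)+\Phi(\theta)-\varphi(0)$ and produce the pointwise limit of $\mathcal{B}^{(\theta)}(z,a,b,c,y)$ stated in (i); in part (ii) the asymptotic $\mathbb{W}(c-b-y)/\mathbb{W}(c-b)\to e^{-\varphi(0)y}$ collapses $\mathcal{A}$, $\mathcal{B}$ and $\mathcal{D}$ to the forms displayed in (ii). Interchange of the limit with the double integrals against $\Pi(\ud z-y)\ud y$ is by dominated convergence: the ratios $W^{(\theta)}(z+b-a)/W^{(\theta)}(b-a)\mathbf{1}_{(a-b,0)}(z)$ and $\mathbb{W}(c-b-y)/\mathbb{W}(c-b)\mathbf{1}_{(0,c-b)}(y)$ lie in $[0,1]$ uniformly in $a$ and $c$, and the linear vanishing of $\mathcal{B}^{(\theta)}$ at $(z,y)=(0,0)$ combines with $\int(1\wedge z^2)\Pi(\ud z)<\infty$ to give an integrable dominator controlling small jumps.

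The main obstacle is the vanishing of $\mathcal{C}^{(\theta)}(a,b)$ and $\mathcal{A}^{(\theta)}(z,a,b,c,y)$ in part (i): the two summands defining each expression both grow like $e^{\Phi(\theta)(b-a)}$ and the leading terms cancel exactly, so the subleading remainder must be tracked. The cleanest way is to rewrite $\mathcal{C}^{(\theta)}(a,b)=-W^{(\theta)}(u)\big(Z^{(\theta)}/W^{(\theta)}\big)'(u)$ and analogously to view the bracket inside $\mathcal{A}^{(\theta)}$ as a Wronskian-type expression in $Z^{(\theta)}$ and $W^{(\theta)}$. Combined with the Esscher decomposition $W^{(\theta)}(x)=e^{\Phi(\theta)x}\widetilde W(x)$, where $\widetilde W$ is the $0$-scale function of the spectrally negative L\'evy process of Laplace exponent $\psi(\cdot+\Phi(\theta))-\theta$ (bounded, non-decreasing, with limit $1/\psi'(\Phi(\theta))$ at infinity), this exhibits the required decay to zero of the remainder and closes the argument.
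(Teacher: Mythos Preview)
Your overall strategy---take $a\downarrow-\infty$ in (\ref{ui}) for part (i) and $c\uparrow\infty$ for part (ii), using monotonicity of the stopping times and bounded convergence on the left, and the standard scale-function asymptotics together with dominated convergence on the right---is exactly what the paper does. The list of limits you invoke for $W^{(\theta)\prime}/W^{(\theta)}$, $W^{(\theta)}(u+z)/W^{(\theta)}(u)$, $\mathbb{W}(c-b-y)/\mathbb{W}(c-b)$ and $1/\mathbb{W}(c-b)$ matches the paper's.

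The one place where you and the paper diverge is precisely the step you flag as the ``main obstacle'': the vanishing, as $a\downarrow-\infty$, of $\mathcal{C}^{(\theta)}(a,b)$ and of the bracket $Z^{(\theta)}(z+b-a)-Z^{(\theta)}(b-a)W^{(\theta)}(z+b-a)/W^{(\theta)}(b-a)$ inside $\mathcal{A}^{(\theta)}$. You propose to attack this analytically by rewriting $\mathcal{C}^{(\theta)}(a,b)=-W^{(\theta)}(u)\big(Z^{(\theta)}/W^{(\theta)}\big)'(u)$ and tracking remainders via the Esscher decomposition $W^{(\theta)}(x)=e^{\Phi(\theta)x}\widetilde W(x)$. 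The paper instead recognises both quantities as Laplace transforms of stopping times: the bracket is $\mathbb{E}_{z+b}\!\big[e^{-\theta\tau_a^-};\,\tau_a^-<\tau_b^+\big]$ by the two-sided exit formula, and $\mathcal{C}^{(\theta)}(a,b)=\mathbb{E}_0\!\big(e^{-\theta\sigma_{b-a}}\big)$ with $\sigma_x=\inf\{t>0:\overline X_t-X_t>x\}$, by Theorem~2.8(ii) of \cite{KKR}. With these identifications the limits are immediate, since $\tau_a^-\to\infty$ and $\sigma_{b-a}\to\infty$ as $a\downarrow-\infty$. Your analytic route is plausible but the sentence ``this exhibits the required decay to zero of the remainder'' hides real work: after the leading $e^{\Phi(\theta)(b-a)}$ terms cancel, one must control the rate at which $\widetilde W(u)\to\widetilde W(\infty)$ and $\widetilde W'(u)\to 0$, and the Esscher decomposition alone does not deliver this without further input on $\widetilde\eta(\bar\epsilon\geq u)$. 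The probabilistic interpretation sidesteps all of this in one line.
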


\begin{corollary}\label{corr2}
Fix $\theta\geq 0$ and assume that $\psi'(0+)>\delta$. Then 
\begin{equation}
\mathbb{E}_b\bigg[\exp\bigg\{-\theta\int_0^{\infty}\mathbf{1}_{\{U_s<b\}}\ud s\bigg\}\bigg]
= \frac{(\psi'(0+) - \delta)\Phi(\theta)}{\theta - \delta\Phi(\theta)}.
\label{totaloccupation}
\end{equation}
Moreover,  the occupation time of $U$ below level $b$ has a density which satisfies
\[
\p_b\left(\int_0^{\infty}\mathbf{1}_{\{U_s<b\}}\ud s \in \ud x\right)= 
\left(\frac{\psi'(0+)}{\delta} - 1\right)\left(\frac{\delta\emph{\texttt{a}}}{1- \delta\emph{\texttt{a}}}\delta_0({\rm d}x) + \mathbf{1}_{\{x>0\}}\sum_{n\geq 1}\delta^n\nu^{*n}({\rm d}x)\right),
\] 
where $\delta_0({\rm d}x)$ is the Dirac-delta measure assigning unit mass to the point zero and necessarily, for $\theta\geq 0$,
\[
\frac{\Phi(\theta)}{\theta} = \emph{\texttt{a}} + \int_0^\infty {\rm e}^{-\theta x}\nu({\rm d}x).
\]
\end{corollary}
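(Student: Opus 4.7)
The plan is to obtain (\ref{totaloccupation}) by letting $c\uparrow\infty$ in Corollary~\ref{corr1}(i), and then invert the resulting Laplace transform to read off the distribution. Under the standing assumption $\psi'(0+)>\delta$, the refracted process $U$ drifts to $+\infty$ under $\mathbb{P}_b$, so $\rho_c^+\uparrow\infty$ $\mathbb{P}_b$-a.s.; bounded convergence applied to the left-hand side of Corollary~\ref{corr1}(i) therefore converts it into the total-occupation Laplace transform. For the right-hand side, two facts are used. First, the spectrally negative L\'evy process with Laplace exponent $\psi(\theta)-\delta\theta$ also drifts to $+\infty$, so its $0$-scale function satisfies $\mathbb{W}(\infty)=1/(\psi'(0+)-\delta)$; whence $1/\mathbb{W}(c-b)\to\psi'(0+)-\delta$ and $\mathbb{W}'(c-b)/\mathbb{W}(c-b)\to 0$ as $c\uparrow\infty$. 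Second, strict convexity of $\psi$ together with $\psi'(0+)>\delta$ ensures that $\psi(\theta)-\delta\theta>0$ for all $\theta>0$, forcing $\varphi(0)=0$. Pointwise, $\mathcal{B}^{(\theta)}(z,b,c,y)\to 1-\mathrm{e}^{\Phi(\theta)z}$ for $y>0$ and $z<0$.

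The only substantive step in the proof of (\ref{totaloccupation}) is to evaluate the limit of the double integral. The change of variable $u=z-y$ followed by integration in $y\in(0,-u)$ reduces it to
\[
\frac{1}{\Phi(\theta)}\int_{(-\infty,0)}\bigl(\mathrm{e}^{\Phi(\theta)u}-1-\Phi(\theta)u\bigr)\,\Pi(\ud u),
\]
which, using the L\'evy--Khintchine representation (\ref{lk}), the identity $\psi(\Phi(\theta))=\theta$, and the formula $\psi'(0+)=\gamma+\int_{(-\infty,-1]}u\,\Pi(\ud u)$, equals $\theta/\Phi(\theta)-\psi'(0+)-\tfrac{\sigma^2}{2}\Phi(\theta)$. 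Adding this to the remaining limiting denominator $(\psi'(0+)-\delta)+\tfrac{\sigma^2}{2}\Phi(\theta)$ causes the Gaussian contributions and $\psi'(0+)$ to cancel, collapsing the denominator to $(\theta-\delta\Phi(\theta))/\Phi(\theta)$. Combined with the limiting numerator $\psi'(0+)-\delta$, this produces (\ref{totaloccupation}). The bookkeeping for the L\'evy measure integral is the main place where care is needed, because the term $-u$ and the term $\mathrm{e}^{\Phi(\theta)u}-1$ are each only conditionally integrable near $u=0$ but become integrable once combined.

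For the density, convexity of $\psi$ with $\psi(0)=0$ gives $\theta=\psi(\Phi(\theta))\ge\psi'(0+)\Phi(\theta)$, i.e., $\Phi(\theta)/\theta\le 1/\psi'(0+)$, so $\delta\Phi(\theta)/\theta<1$ and the geometric expansion
\[
\frac{(\psi'(0+)-\delta)\Phi(\theta)}{\theta-\delta\Phi(\theta)}=\left(\frac{\psi'(0+)}{\delta}-1\right)\sum_{n\ge 1}\delta^{n}\left(\frac{\Phi(\theta)}{\theta}\right)^{\!n}
\]
converges absolutely. Substituting $\Phi(\theta)/\theta=\texttt{a}+\int_0^\infty\mathrm{e}^{-\theta x}\nu(\ud x)$ identifies each $(\Phi(\theta)/\theta)^n$ as the Laplace transform of the $n$-fold convolution of the positive measure $\texttt{a}\delta_0+\nu$ on $[0,\infty)$. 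The atom at $0$ in each such convolution power has mass $\texttt{a}^n$, so summing atoms with weights $\delta^n$ gives the Dirac contribution $\delta\texttt{a}/(1-\delta\texttt{a})$; term-by-term inversion of the remaining summands on $(0,\infty)$ produces the stated absolutely continuous component, completing the proof.
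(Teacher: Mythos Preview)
Your proof is correct and follows essentially the same route as the paper. The paper states that (\ref{totaloccupation}) can be obtained either by taking $c\uparrow\infty$ in Corollary~\ref{corr1}(i) (or $a\downarrow-\infty$ in (ii)), or by revisiting the excursion argument of Theorem~\ref{main} directly with $a=-\infty$, $c=+\infty$; you carry out the first of these options with more explicit detail than the paper gives, including the handling of the Gaussian term in the denominator and the L\'evy--Khintchine bookkeeping that the paper omits. For the density, your geometric-series expansion and the separation of the atom at zero are exactly the argument the paper uses.
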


Note that from this last corollary, we easily recover Theorem 1 of Landriault et al. \cite{LRZ}. Specifically, when there is no refraction and $\delta = 0$, providing $X$ drifts to $+\infty$, that is to say $\psi'(0+)>0$, we have the occupation of $X$ below level $b$
\[
\mathbb{E}_b\bigg[\exp\bigg\{-\theta\int_0^{\infty}\mathbf{1}_{\{X_s<b\}}\ud s\bigg\}\bigg]
= \frac{\psi'(0+) \Phi(\theta)}{\theta },
\]
and its density is given by
\[
\p_b\left(\int_0^{\infty}\mathbf{1}_{\{X_s<b\}}\ud s \in \ud x\right)=\psi'(0+) (\texttt{a}\delta_0({\rm d}x)+\nu(\ud x)),
\]
 which is nothing more than the Sparre-Andersen identity.
Similarly one easily checks that by taking $\delta\downarrow 0$  in the identity given in part (ii) of Corrollary \ref{corr1}, one recovers the statement of Theorem 2 in \cite{LRZ}.

The method we shall use to prove the above results is somewhat different to the techniques employed by \cite{LRZ} and we appeal directly to the simple idea of Bernoulli trials that lies behind the excursion theory of strong Markov process, such as $U$ is. As we have no information about the excursion measure of $U$ from $b$, we initially perform the analysis for the case that $X$ has paths of bounded variation. In that case the process $U$ will almost surely take a strictly positive amount of time before it jumps below $b$ and we can construct its excursions from piecewise trajectories of spectrally negative L\'evy processes. Kyprianou and Loeffen \cite{KL} showed that in the case that $X$ has unbounded variation, refracted L\'evy processes may be constructed as the almost sure uniform limit of a sequence of bounded variation refracted L\'evy processes.  Taking account of the fact that, thanks to the continuity theorem for Laplace transforms,  scale functions are continuous in the Laplace exponent of the underlying L\'evy process, which itself is continuous in the L\'evy triplet $(\gamma, \sigma, \Pi)$ (where we understand continuity in the measure $\Pi$ to mean in the sense of weak convergence), we use an approximation procedure to derive our results for the case of unbounded variation paths from the case of bounded variation paths. 

Note that all our results can be established  at any starting point $x$ at the cost of more complicated expressions. Such identities follow from the Markov property  at the hitting times $\rho^+_b$ or $\rho^-_b$, according to whether starting point $x$ is  smaller or bigger  than $b$, and our results, we leave the details to the reader.   

The remainder of the paper is structured as follows. In the next chapter we give the proof of Theorem \ref{main} for the case that $X$ has paths of bounded variation. Thereafter, in Section \ref{sigma=0}, by means of an approximation with refracted processes having bounded variation paths, we derive the identity for the case that $X$ has paths of unbounded variation, but no Gaussian component. Finally, in Section \ref{sigma>0} we derive the missing case that $X$ has paths of unbounded variation with a Gaussian component, again by an approximation scheme, this time using a sequence of refracted L\'evy processes with unbounded variation paths and no Gaussian component. In the final section, we give some remarks about how the Corollaries \ref{corr1} and \ref{corr2} can be derived from Theorem \ref{main} by taking appropriate limits.

\section{Proof of Theorem \ref{main}: bounded variation paths}

Let $Y = (Y_t, t\geq 0)$, where $Y_t:=X_t-\delta t,$ for $t \geq 0$ and recall that for each $q\geq 0$,  $W^{(q)}$ and $\mathbb{W}^{(q)}$ denote   the scale functions of the L\'evy processes $X$ and $Y$ respectively with $W: = W^{(0)}$ and $\mathbb{W} =\mathbb{W}^{(0)}$. Moreover, $\varphi$ is defined as the right-inverse of the Laplace exponent of $Y$.
We define the following first passages times for  $X$ and $Y$,
\begin{equation}
\tau_b^-=\inf\{t>0:X_t<b\},\qquad \tau_a^+=\inf\{t>0:X_t>a\},
\end{equation}
\begin{equation}
\kappa_b^-=\inf\{t>0:Y_t<b\},\qquad \kappa_a^+=\inf\{t>0:Y_t>a\}.
\end{equation}

Let $a<b<c$ and recall that
\begin{align}
\rho_{a}^-=\inf\{t>0:U_t< a\}\qquad\text{ and }\qquad
\rho_{c}^+=\inf\{t>0:U_t> c\}.\notag
\end{align}
We are interested in the  quantity
\begin{equation}\label{ott}
\mathbb{E}_b\left[\exp\left\{-\theta\int_0^{\rho_a^-\wedge\rho_{c}^+}\mathbf{1}_{\{U_s<b\}}\ud s\right\}\right].
\end{equation}
A crucial point in our analysis is that $b$ is irregular for $(-\infty,b)$ for $Y$ on account of it having paths of bounded variation and moreover that $Y$ does not creep downwards. This means that each excursion of $U$ from $b$ consists of a copy of $(Y_t, t\leq \kappa^-_b)$ issued from $b$ and, on the event $\{\kappa^-_b<\infty\}$, the excursion continues from the time $\kappa^-_b$ as an independent copy of $(X_t, t\leq \tau^+_b)$ issued from the randomised initial position $Y_{\kappa^-_b}$.

Here, we  express (\ref{ott}) in terms of the excursions of the process $U$ confined in the interval $[a,c]$ and, subsequently, the first excursion that exists  $[a,c]$.   Let  $(\xi_s^{(i)}, 0\le s\le \ell_i)$ be the  $i$-th excursion  of $U$ away from $b$ that does not exit  $[a,c]$, here $\ell_i$ denotes the length of the excursion at the moment it exits the interval $[a,c]$. Similarly, let $(\xi^*_s, 0\le s\le \ell_*)$  be the first excursion  of $U$ away from $b$ that exits the interval $[a,c]$ and $\ell_*$  its length.
From the strong Markov property, it is clear that the random variables $\int_0^{\ell_i}\mathbf{1}_{\{\xi^{(i)}_s<b\}}\ud s$  are i.i.d. and  independent of $\int_0^{\ell_*}\mathbf{1}_{\{\xi^*_s<b\}}\ud s$.  Set $\zeta = \inf\{t>0 : U_t = b\}$, let $E$ be the event $\{\sup_{t\leq \zeta}U_t \leq c, \, \inf_{t\leq \zeta} U_t \geq a\}$ and
$p=\p_b(E)$. A standard description of excursions of $U$ away from $b$, but confined to the interval $[a,c]$, dictates that the number of finite excursions is distributed according to an independent geometric  random variable, say $G_p$, (supported on $\{0,1,2,\ldots\}$) with parameter $p$, the random variables $\int_0^{\ell_i}\mathbf{1}_{\{\xi^{(i)}_s<b\}}\ud s$ are equal in distribution to $\int_0^\zeta \mathbf{1}_{\{U_s<b\}}{\rm d}s$ under the conditional law $\p_b(\cdot|E)$ and the random variable $\int_0^{\ell_*}\mathbf{1}_{\{\xi^*_s<b\}}\ud s$ is equal in distribution $\int_0^{\rho_a^-\wedge\rho_{c}^+} \mathbf{1}_{\{U_s<b\}}{\rm d}s$ but now under the conditional law $\p_b(\cdot|E^c)$.

It now follows that  
\begin{eqnarray}
\lefteqn{\mathbb{E}_b\bigg[\exp\bigg\{-\theta\int_0^{\rho_a^-\wedge\rho_{c}^+}\mathbf{1}_{\{U_s<b\}}\ud s\bigg\}\bigg]}&&\notag\\
&=&\e_b\left[\prod_{i=0}^{G_p}\exp\left\{-\theta\int_0^{\ell_i}\mathbf{1}_{\{\xi^{(i)}_s<b\}}\ud s\right\}\exp\left\{-\theta\int_0^{\ell_*}\mathbf{1}_{\{\xi^*_s<b\}}\ud s\right\}\right]\notag\\
&=&\mathbb{E}\left[\e_b\left[\exp\left\{-\theta\int_0^{\ell_i}\mathbf{1}_{\{\xi^{(i)}_s<b\}}\ud s\right\}\right]^{G_p}\right]\e_b\left[\exp\left\{-\theta\int_0^{\ell_*}\mathbf{1}_{\{\xi^*_s<b\}}\ud s\right\}\right],
\label{putin}
\end{eqnarray}
Recall that the generating function of the independent geometric random variable $G_p$ satisfies,
\[
F(s)=\frac{q}{1-sp},\qquad |s|<\frac{1}{p},
\]
where $q=1-p$. 
Therefore the first term of the right-hand side of the above identity satisfies
\begin{equation}
\begin{split}
\mathbb{E}\left[\e_b\left[\exp\left\{-\theta\int_0^{\ell_i}\mathbf{1}_{\{\xi^{(i)}_s<b\}}\ud s\right\}\right]^{G_p}\right]
&=\frac{q}{1-p\e_b\left[\exp\left\{-\theta\displaystyle\int_0^{\ell_i}\mathbf{1}_{\{\xi^{(i)}_s<b\}}\ud s\right\}\right]}.
\end{split}
\label{pieces together}
\end{equation}
Moreover, again taking account of the remarks in the previous paragraph, we also have that 
\[
\begin{split}
\e_b\left[\exp\left\{-\theta\int_0^{\ell_i}\mathbf{1}_{\{\xi^{(i)}_s<b\}}\ud s\right\}\right]&=\frac{1}{p}\int_{(a,b)}\mathbb{P}_b\Big(Y_{\kappa_b^-}\in
dz,\overline{Y}_{\kappa_b^-}<c\Big)\mathbb{E}_z\Big[{\rm e}^{-\theta\tau_b^+},\tau_b^+<\tau_a^-\Big]\\
&=\frac{1}{p}\int_{(a-b,0)}\mathbb{P}\Big(Y_{\kappa_0^-}\in
dz,\overline{Y}_{\kappa_0^-}<c-b\Big)\mathbb{E}_{z+b-a}\Big[{\rm e}^{-\theta\tau_{b-a}^+},\tau_{b-a}^+<\tau_0^-\Big],
\end{split}
\]
where $\overline{Y}_t=\sup_{0\le s\le t}Y_s$. From the Compensation Formula (see for instance identity (8.27) in \cite{K}), one can deduce
\begin{equation}\label{ina}
\mathbb{P}\Big(Y_{\kappa_0^-}\in
dz,\overline{Y}_{\kappa_0^-}<c-b\Big)=\frac{\mathbb{W}(0+)}{\mathbb{W}(c-b)}\int_0^{c-b}\mathbb{W}(c-b-y)\Pi(\ud z-y)\ud y,
\end{equation}
and from identity (8.8) in \cite{K}, we have
\begin{equation}\label{inc}
\mathbb{E}_{z+b-a}\Big[{\rm e}^{-\theta\tau_{b-a}^+},\tau_{b-a}^+<\tau_0^-\Big]=\frac{W^{(\theta)}(z+b-a)}{W^{(\theta)}(b-a)}.
\end{equation}
Therefore from  (\ref{ina}) and  (\ref{inc}), we get
\[
\e_b\left[\exp\left\{-\theta\int_0^{\ell_i}\mathbf{1}_{\{\xi^{(i)}_s<b\}}\ud s\right\}\right]=\frac{\mathbb{W}(0+)}{p}\int_0^{c-b}\int_{(a-b,0)}\frac{\mathbb{W}(c-b-y)}{\mathbb{W}(c-b)}\frac{W^{(\theta)}(z+b-a)}{W^{(\theta)}(b-a)}\Pi(dz-y){\rm d}y.
\]
It is worth noting at this point that $\mathbb{W}(0+) >0$ precisely because we have restricted ourselves to the case of bounded variation paths.

The classical ruin problem for $Y$ tells us that 
\[
\p\big(\kappa_0^-<\infty\big)=
\begin{cases}
1 &\text{ if $\psi^\prime(0+)-\delta\le0$,}\\
1-(\psi^\prime(0+)-\delta)W(0+)&\text{ if $\psi^\prime(0+)-\delta>0$,}
\end{cases}
\]
see for example formula  (8.7) in \cite{K}. Taking limits as $c\uparrow\infty$ the formula with (\ref{ina}), making use of Exercise  8.5 in \cite{K} which tells  us that $\lim_{c\uparrow\infty}\mathbb{W}(c-b-y)/\mathbb{W}(c-b) = \exp\{-\varphi(0)y\}$, we get 
\[
1=\mathbb{W}(0+)(\psi^\prime(0+)-\delta)^++\mathbb{W}(0+)\int_0^\infty\int_{(-\infty,0)}\Pi(\ud z-y){\rm e}^{-\varphi(0)y}\ud y.
\]
Hence putting all the pieces together in (\ref{pieces together}), we obtain
\begin{eqnarray}
\lefteqn{\mathbb{E}\bigg[\e_b\bigg[\exp\bigg\{-\theta\int_0^{\ell_i}\mathbf{1}_{\{\xi^{(i)}_s<b\}}\ud s\bigg\}\bigg]^{G_p}\bigg]}&&\notag\\
&=&\frac{q}{\mathbb{W}(0+)(\psi^\prime(0+)-\delta)^++\mathbb{W}(0+)\displaystyle\int_0^{\infty}\int_{(-\infty,0)}\mathcal{B}^{(\theta)}(z,a,b,c,y)\Pi(\ud z-y)\ud y},
\label{shitty}
\end{eqnarray}
where
\begin{equation}
\mathcal{B}^{(\theta)}(z,a,b,c,y)={\rm e}^{-\varphi(0)y}-\frac{\mathbb{W}(c-b-y)}{\mathbb{W}(c-b)}\frac{W^{(\theta)}(z+b-a)}{W^{(\theta)}(b-a)}\mathbf{1}_{(a-b,0)}(z)\mathbf{1}_{(0,c-b)}(y).\notag
\end{equation}

Next, we compute  the Laplace transform of  $\int_0^{\ell_*}\mathbf{1}_{\{\xi^*_s<b\}}\ud s$.  Recalling that $\ell_*$ is equal in law to $\rho_a^-\wedge\rho_{c}^+$ under $\p_b(\cdot|E^c)$ we have two  cases to consider. In the first case,   the process $Y$ continuously exits the interval $[b,c]$ at $c$. In the second case, the process $U$ exits  the interval $[b,c]$ downwards by a jump, if the process jumps into $[a,b)$,  it continues until it jumps again below $a$. Hence  from identities (8.8), (8.9) in \cite{K} and (\ref{ina}), we have
\begin{equation}\label{bigshit}
\begin{split}
\e_b\bigg[&\exp\bigg\{-\theta\int_0^{\ell_*}\mathbf{1}_{\{\xi^*_s<b\}}\ud s\bigg\}\bigg]\\\
&=\frac{1}{q}\bigg(\mathbb{P}_b\Big(\kappa_c^+<\kappa_b^-\Big)+\int_{(a,b)}\mathbb{P}_b\Big(Y_{\kappa_b^-}\in
\ud z,\overline{Y}_{\kappa_b^-}<c\Big)\mathbb{E}_z\Big[{\rm e}^{-\theta\tau_a^-},\tau_a^-<\tau_b^+\Big]\bigg)\\
&=\frac{1}{q}\bigg(\frac{\mathbb{W}(0+)}{\mathbb{W}(c-b)}+\int_{(a-b,0)}\mathbb{P}\Big(Y_{\kappa_0^-}\in
\ud z,\overline{Y}_{\kappa_0^-}<c-b\Big)\mathbb{E}_{z+b-a}\Big[{\rm e}^{-\theta\tau_0^-},\tau_0^-<\tau_{(b-a)}^+\Big]\bigg)\\
&=\frac{1}{q}\bigg(\frac{\mathbb{W}(0+)}{\mathbb{W}(c-b)}+\mathbb{W}(0+)\int_0^{c-b}\int_{(a-b,0)}\bigg(Z^{(\theta)}(z+b-a)-Z^{(\theta)}(b-a)\frac{W^{(\theta)}(z+b-a)}{W^{(\theta)}(b-a)}\bigg)\\
&\hspace{10cm}\times\frac{\mathbb{W}(c-b-y)}{\mathbb{W}(c-b)}\Pi(\ud z-y)\ud y
\bigg).
\end{split}
\end{equation}
Let
\[
{\mathcal A}^{(\theta)}(z,a,b,c,y)=A^{(\theta)}(z,a,b)\frac{\mathbb{W}(c-b-y)}{\mathbb{W}(c-b)}\mathbf{1}_{(a-b,0)}(z)\mathbf{1}_{(0,c-b)}(y),
\]
where we recall $A^{(\theta)}(z, a, b)$ was defined in (\ref{A-theta}).

Plugging (\ref{shitty}) and (\ref{bigshit}) back into (\ref{putin}) we get the desired identity.  \hfill$\square$

\section{Proof of Theorem \ref{main}: unbounded variation paths, $\sigma^2=0$}\label{sigma=0}

In this part of the proof we extend the previous calculations to unbounded variation
L\'evy process with no Gaussian component ($\sigma =0$). 
There is a very particular reason why we do not consider the inclusion of the Gaussian component, which is related to smoothness properties of scale functions. We shall address this issue  at the end of the section.

To start with we recall the following well established result
which can be found discussed, for example on p.210 of  \cite{B}.
For any spectrally negative L\'evy process with
unbounded variation paths $X$, there exists a sequence of bounded
variation spectrally negative Levy processes $X^n = (X^n_t, t\geq 0)$, $n\geq 1$, such that for
each $t > 0$,
\begin{equation}
\lim_{n\to\infty} \sup_{s\in[0,t ]} \big|X^n_s-X_s\big| = 0, \qquad \textrm{a.s.}
\label{uniformBertoin}
\end{equation}
Moreover, when $X^n$ is written in the form (\ref{BVSNLP})
the drift coefficient tends to infinity as $n\uparrow\infty$. The
latter  implies that for all $n$ sufficiently large, the
sequence $X^n$ will automatically fulfill  condition ({\bf H}). Such a
sequence, $X^n$ will be referred to as strongly approximating for
$X$. Rather obviously we may also talk of a strongly approximating
sequence for processes of bounded variation respecting ({\bf H}).

On the other hand following Kyprianou and Loeffen \cite{KL}, we have
also have the following Lemma.
\begin{lemma}\label{Un}
Suppose that $X$ is a spectrally negative Levy process satisfying
{\bf (H)} and that $(X^n)_{n\ge 1}$ is any strongly approximating sequence. Denote by
$U^n = (U^n_t, t\geq 0)$, ${n\ge 1}$, the sequence of pathwise solutions associated with each $X^n$. Then
there exists a stochastic process $U = (U_ t: t\geq 0)$ such that
for each fixed $t > 0$,
\begin{equation}
\lim_{n\to\infty} \sup_{s\in[0,t ]} \big|U^n_s-U_s\big| = 0,\qquad \textrm{ a.s.}
\end{equation}
\end{lemma}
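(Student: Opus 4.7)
The plan is to construct $U$ as the almost sure uniform limit on each compact interval $[0,t]$ of the approximating sequence $U^n$, by first showing that $(U^n)_{n\ge 1}$ is Cauchy in that topology. Because the drift coefficient of $X^n$ in its bounded-variation decomposition diverges, condition $(\mathbf{H})$ holds for $X^n$ for all sufficiently large $n$, so each $U^n$ is the unique strong solution of the refracted SDE associated with $X^n$ via the bounded-variation case already established in the paper under review. I will work pathwise on the full-measure event on which $X^n \to X$ uniformly on $[0,t]$.

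The first concrete step is to subtract the SDEs for $U^n$ and $U^m$, which yields
\[
U^n_s - U^m_s = (X^n_s - X^m_s) - \delta\int_0^s \bigl(\mathbf{1}_{\{U^n_r>b\}} - \mathbf{1}_{\{U^m_r>b\}}\bigr)\,\ud r.
\]
The first term on the right is uniformly small by the approximation hypothesis, so the whole task reduces to controlling the integral. Using the elementary bound $|\mathbf{1}_{\{u>b\}} - \mathbf{1}_{\{v>b\}}|\le \mathbf{1}_{\{|u-b|\le |u-v|\}}$ (the two indicators can only differ when $u$ and $v$ straddle $b$), the integral is dominated by the occupation time of $U^n$ in a band of radius $D^{n,m}_t := \sup_{r\le t}|U^n_r-U^m_r|$ around the level $b$.

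The hard part will be to show that $\int_0^t \mathbf{1}_{\{|U^n_r - b|\le \epsilon\}}\,\ud r \le g(\epsilon)$ for some $g$ with $g(\epsilon)\downarrow 0$, uniformly in $n$ large. The unbounded variation hypothesis on $X$ is crucial here: for the limiting process, the level $b$ is regular for both open half-lines for $Y = X - \delta t$, hence the occupation measure of any refracted process associated with $X$ has no atom at $b$ and assigns vanishing mass to thin bands around it. Transferring this information uniformly to the sequence $U^n$, whose drivers $X^n$ have bounded variation and for which $b$ need not be regular, is the subtle point; the strategy I would follow is a tightness-type argument on the occupation measures combined with the uniform convergence $X^n \to X$, in the spirit of the construction in Kyprianou and Loeffen.

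Once the uniform occupation estimate is secured, the proof closes quickly: combining the two bounds gives an implicit inequality of the form $D^{n,m}_t \le \sup_{r\le t}|X^n_r - X^m_r| + \delta\,g(D^{n,m}_t)$, and since $g$ is small near zero while $\sup_{r\le t}|X^n_r - X^m_r|\to 0$, a standard analytical argument forces $D^{n,m}_t\to 0$ as $n,m\to\infty$. Setting $U := \lim_n U^n$ then gives the desired uniform-on-compacts limit, and passing to the limit in the SDE for $U^n$, once more using the occupation control to handle the discontinuous indicator in the drift, identifies $U$ as a pathwise solution of the refracted SDE driven by $X$, consistent with the definition of the refracted process.
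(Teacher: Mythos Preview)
The paper does not actually prove this lemma: immediately before its statement the authors write ``following Kyprianou and Loeffen \cite{KL}, we have also have the following Lemma,'' and no proof is supplied. So there is nothing in the paper to compare your attempt against; the result is simply imported from \cite{KL}.

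Your sketch is broadly in line with the strategy of \cite{KL}: subtract the two SDEs, bound the indicator difference by the occupation of a thin band around $b$, and close with an implicit inequality for $D^{n,m}_t$. Two points are worth flagging. First, the lemma as stated only assumes $(\mathbf{H})$, not that $X$ has unbounded variation, so your appeal to regularity of $b$ for $Y=X-\delta t$ is specific to the unbounded-variation case; this is the case that matters for the paper's purposes, but you should say so. Second, the step you label ``the hard part'' --- a uniform-in-$n$ bound $\int_0^t \mathbf{1}_{\{|U^n_r-b|\le\epsilon\}}\,\ud r\le g(\epsilon)$ with $g(\epsilon)\downarrow 0$ --- is genuinely the crux of the argument in \cite{KL}, and your proposal does not supply it beyond invoking tightness ``in the spirit of'' that paper. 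Without a concrete mechanism (for instance, relating the occupation time of $U^n$ near $b$ to that of $X^n$ or $Y^n$ near $b$ via the structure of the SDE, and then using the uniform convergence $X^n\to X$ together with the fact that the limiting process has no occupation atom at $b$), the implicit inequality $D^{n,m}_t\le \sup_{r\le t}|X^n_r-X^m_r|+\delta g(D^{n,m}_t)$ cannot be closed, since one needs $g$ to decay fast enough relative to the identity for the fixed-point argument to bite. As written, then, your proposal is a correct high-level outline of the \cite{KL} construction rather than a self-contained proof.
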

Let  $X$ be a L\'evy process of unbounded variation with no Gaussian component and $U$ the
corresponding solution to (\ref{SDE}). 
Let us denote by $\Pi_n, W_n^{(\theta)}, Z^{(\theta)}_n$ and $\psi_n$ for the corresponding L\'evy measure, scale functions and Laplace exponent of a strongly approximating sequence $X^n$, $n\geq 1$, discussed above. For the corresponding sequences of strongly approximating refracted L\'evy processes $U^n$, $n\geq1$, given by Lemma \ref{Un},  define
\[
\rho_{x}^{+,n}=\inf\{t\ge 0: U^n_t> x\}\qquad\textrm{ and }\qquad\rho_{x}^{-,n}=\inf\{t\ge 0: U^n_t<x\},
\]
where $x\in \R$.
We also denote by $\mathbb{W}_n$, $\mathbb W^{(\theta)}_n$ and $\varphi_n$ for the scale functions and the right-continuous inverse of the Laplace exponent associated to the L\'evy process $Y^n$ which is defined by $Y^n_t=X^n_t-\delta t$, for $t\ge 0$. 

The conclusion of the previous section tells us that 
\begin{eqnarray}
\lefteqn{\hspace{-2cm}\mathbb{E}_b\left[\exp\left\{-\theta\int_0^{\rho_{c}^{+,n}\wedge\rho_{a}^{-,n}}\mathbf{1}_{\{U^n_s<b\}}\ud s\right\}\right]}&&\notag\\
&&=\frac{\displaystyle\frac{1}{\mathbb{W}_n(c-b)}+\displaystyle\int_0^{\infty}\int_{(-\infty,0)}{\mathcal A}^{(\theta),n}(z,a,b,c,y)\Pi_n(\ud z-y)\ud y}{(\psi_n'(0+)-\delta)^++\displaystyle\int_0^{\infty}\int_{(-\infty,0)}\mathcal{B}^{(\theta),n}(z,a,b,c,y)\Pi_n(\ud z-y)\ud y},
\label{fii2}
\end{eqnarray}
where ${\mathcal A}^{(\theta),n}(z,a,b,c,y)$ and 
$\mathcal{B}^{(\theta),n}(z,a,b,c,y)$ are obviously defined. Our objective is to show that both left and right hand side above converge to their respective components of (\ref{ui}).


To this end, we shall start by noting that, according to Bertoin $\cite{B}$ (see the comment on page  210), the sequence $(X^n)_{n\ge 1}$ may be constructed in such a way that its
  L\'evy measure $\Pi_n$  is given by $\Pi_n(\ud x)=\mathbf{1}_{\{x<-1/n\}}\Pi(\ud x)$ and accordingly, there is   pointwise convergence of $\psi_n(\theta)$ to $\psi(\theta)$, the Laplace exponent of the desired limiting process $X$.
This construction also ensures the convergence of $\psi'_n(0+)$ to $\psi'(0+)$. Through the Continuity Theorem for Laplace transforms, it was also shown 
 in \cite{KL} that 
\begin{equation}
\lim_{n\to\infty}W^{(\theta)}_n(x)=W^{(\theta)}(x)\qquad\text{ for all} \quad x\geq0,
\label{recall}
\end{equation}
Therefore, as $n\to\infty$, we can ensure with the aformentioned approximating sequence $(X^n)_{n\geq 1}$, we have for all $y,z$
\[
\mathcal{B}^{(\theta),n}(z,a,b,c,y)\to \mathcal{B}^{(\theta)}(z,a,b,c,y)
\quad\text{
and
}\quad
{\mathcal A}^{(\theta),n}(z,a,b,c,y)\to {\mathcal A}^{(\theta)}(z,a,b,c,y).
\]


If we can provide appropriate uniform and integrable bounds on 
\[
|\mathcal{B}^{(\theta),n}(z,a,b,c,y)|\qquad \textrm{ and }\qquad |{\mathcal A}^{(\theta),n}(z,a,b,c,y)|,
\]
 a straightforward argument using dominated convergence will suffice to prove that the right hand side of (\ref{fii2}) converges to the desired limit.
To this end, we need the following helpful result.

\begin{lemma}\label{eta-convergence}
Let $x>0$. Suppose that $(\psi_n)_{n\geq 1}$ is a sequence of Laplace exponents of spectrally negative L\'evy processes such that $\psi_n$ tends pointwise on $(0,\infty)$ to $\psi$, which is also the Laplace exponent of a spectrally negative L\'evy process.  Denote by $(\eta_n)_{n\geq 1}$ and $\eta$
the  excursion measures of the respective processes associated to $(\psi_n)_{n\geq 1}$ and $\psi$, reflected in their supremum. Then, writing $\overline\epsilon$ for the supremum of the canonical excursion,
\begin{equation}
\eta_n(\bar{\epsilon}\geq x)\to \eta(\bar{\epsilon}\geq x)
\end{equation}
as $n\rightarrow\infty$.
\end{lemma}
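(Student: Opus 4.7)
The plan is to identify $\eta(\bar\epsilon \geq x)$ as a functional of the scale function $W$ associated to $\psi$, and then to exploit the pointwise convergence $W_n(x) \to W(x)$ already recalled in (\ref{recall}) (applied with $\theta = 0$).

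Fix the normalization of local time at the supremum by $L_t := \bar{X}_t$, which is canonical since a spectrally negative L\'evy process has no positive jumps and its running supremum is continuous. By It\^o's excursion theory for the reflected process $\bar{X} - X$, the excursions of height strictly exceeding $h > 0$ arrive along a Poisson point process of intensity $\eta(\bar\epsilon > h)$ per unit of local time. For $0 < y < x$, under $\mathbb{P}_y$ the event $\{\tau_x^+ < \tau_0^-\}$ is precisely the event that no excursion initiated at local time $s \in [0, x-y]$ attains height exceeding $y + s$. Independent Poisson thinning, combined with the classical exit identity $\mathbb{P}_y(\tau_x^+ < \tau_0^-) = W(y)/W(x)$, therefore yields
\[
\log \frac{W(x)}{W(y)} \;=\; \int_y^x \eta(\bar\epsilon > r)\, dr, \qquad 0 < y \leq x,
\]
or equivalently $\eta(\bar\epsilon > r) = W'(r)/W(r)$ at every $r$ at which $W$ is differentiable. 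The same derivation applies verbatim to each $\psi_n$, producing the analogous identity with $W$, $\eta$ replaced by $W_n$, $\eta_n$.

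Since $W_n(r) \to W(r)$ for every $r > 0$ by (\ref{recall}) with $\theta = 0$, passing $n \to \infty$ in the integrated identity shows that $\int_y^x \eta_n(\bar\epsilon > r)\, dr \to \int_y^x \eta(\bar\epsilon > r)\, dr$ for every $0 < y \leq x$.

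The remaining and, in my view, most delicate step is to upgrade this integrated convergence into pointwise convergence at the fixed $x$. My plan is to leverage the monotonicity of $r \mapsto \eta_n(\bar\epsilon > r)$ (a tail function under $\eta_n$): by Helly's selection principle, every subsequence admits a further subsequence along which $\eta_{n_k}(\bar\epsilon > \cdot)$ converges pointwise to some decreasing function $g$. The integrated convergence forces $g$ to agree with $\eta(\bar\epsilon > \cdot)$ Lebesgue-almost everywhere and, together with the common monotonicity, at every point of continuity of $\eta(\bar\epsilon > \cdot)$. Continuity of $W'/W$ at the chosen $x > 0$ (a consequence of the $C^1$-regularity of $W$ on $(0, \infty)$ in the unbounded variation case, and of the fact that $W'/W$ is decreasing on $(0,\infty)$ with at most countably many discontinuities in general) then pins down the limit uniquely, yielding $\eta_n(\bar\epsilon \geq x) = \eta_n(\bar\epsilon > x) \to \eta(\bar\epsilon > x) = \eta(\bar\epsilon \geq x)$, as required.
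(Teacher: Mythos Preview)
Your argument is correct and shares the paper's overall architecture: both reduce the question to convergence of scale functions via the identity $W'(r)/W(r)=\eta(\bar\epsilon\geq r)$, and both finish by exploiting the monotonicity of $r\mapsto\eta_n(\bar\epsilon\geq r)$ together with the continuity of the limiting $\eta(\bar\epsilon\geq\cdot)$ (which the paper, like you, invokes from the unbounded-variation hypothesis on the limit). The technical route to the a.e.\ statement differs, however. The paper works pointwise: it cites a separate result (Lemma~20 of \cite{KL}) giving $W_n'\to W'$ almost everywhere, which combined with $W_n\to W$ yields $\eta_n(\bar\epsilon\geq x)\to\eta(\bar\epsilon\geq x)$ for a.e.\ $x$ directly; it then sandwiches an arbitrary point between nearby points of convergence. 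You bypass the derivative-convergence citation entirely by integrating the identity, so that only pointwise convergence of $W_n$ is needed, and you use Helly's selection to produce subsequential limits whose integrals are pinned down. Your approach is more self-contained; the paper's is shorter once the cited lemma is granted.

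Two small points to tighten. First, Helly's selection requires uniform boundedness on compact subsets of $(0,\infty)$, which you do not state; it follows immediately from your integrated identity and monotonicity, since for any $y_0>0$ one has $(y_0/2)\,\eta_n(\bar\epsilon>y_0)\leq\int_{y_0/2}^{y_0}\eta_n(\bar\epsilon>r)\,dr=\log\bigl(W_n(y_0)/W_n(y_0/2)\bigr)$, and the right-hand side converges. Second, the equality $\eta_n(\bar\epsilon\geq x)=\eta_n(\bar\epsilon>x)$ in your last line need not hold for the approximating processes; it is cleaner to run the Helly argument directly with the non-increasing functions $r\mapsto\eta_n(\bar\epsilon\geq r)$, whose integrals coincide with those of $\eta_n(\bar\epsilon>r)$ anyway.
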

\begin{proof}
Let $x>0$, then by identity Lemma 8.2 in \cite{K} we have that
\begin{equation}\label{ex1}
W_n'(x)=\eta_n(\bar{\epsilon}\geq x)W_n(x),
\end{equation}
for almost every $x>0$,
with a slightly stronger statement holding for $X$ on account of the fact that it has paths of unbounded variation.
Indeed, it is also shown  that  the function $\eta(\bar{\epsilon}\geq x)$ is continuous for all $x>0$. In that case $W'$ exists for all $x>0$ and its continuous. 

Now using Lemma 20 in \cite{KL} we
know that
\begin{equation}
W_n(x)\to W(x)\text{ as $n\to\infty$, for all } x>0,\notag
\end{equation}
and also that $W_n'\to W'$ as $n\to\infty$ for almost all
$x>0$. Then by (\ref{ex1}) we can conclude that
\begin{equation}
\eta_n(\bar{\epsilon}\geq x)\to \eta(\bar{\epsilon}\geq x)\text{ as
$n\to\infty$, for almost all $x\geq0$}.\notag
\end{equation}
Let $N=\{x>0: \lim_{n\to\infty}\eta_n(\bar{\epsilon}\geq
x)=\eta(\bar{\epsilon}\geq x)\}$, and let $z>0$ such that $z\not\in
N$. We consider two sequences $(x_n)_{n\in\mathbb{N}}\subset
N$ and $(y_n)_{n\in\mathbb{N}}\subset N$, such that
\begin{equation}
x_n\uparrow z\qquad\text{and}\qquad y_n\downarrow z\text{ as
$n\to\infty$}.\notag
\end{equation}
Then we have that
\begin{equation}
\eta_n(\bar{\epsilon}\geq y_m)\leq \eta_n(\bar{\epsilon}\geq z)\leq
\eta_n(\bar{\epsilon}\geq x_m)\, \notag
\quad\text{ and hence }\quad
\eta(\bar{\epsilon}\geq y_m)\leq
\limsup_{n\to\infty}\eta_n(\bar{\epsilon}\geq z)\leq
\eta(\bar{\epsilon}\geq x_m).\notag
\end{equation}
Finally taking $m\to\infty$ in the previous inequality we obtain
\begin{equation}
\eta(\bar{\epsilon}> z)\leq
\limsup_{n\to\infty}\eta_n(\bar{\epsilon}\geq z)\leq
\eta(\bar{\epsilon}\geq z).\notag
\end{equation}
Using the fact that $z\mapsto \eta(\bar{\epsilon}\geq z)$ is continuous, we get that
$
\limsup_{n\to\infty}\eta_n(\bar{\epsilon}\geq z)=
\eta(\bar{\epsilon}\geq z)
$
Appealing to a similar argument for   $\liminf\eta_n(\overline\epsilon \geq z)$,  we  conclude that
$
\lim_{n\to\infty}\eta_n(\bar{\epsilon}\geq z)=\eta(\bar{\epsilon}\geq
z).$
This implies that $z\in N$ which is a contradiction. The statement of the lemma now follows.
\end{proof}

  \bigskip
  
We return to the proof of Theorem \ref{main}. Note that, since  the functions $\mathbb{W}_n,W^{(\theta)}_n$ and
$Z^{(\theta)}_n$ are strictly increasing, then for $y\in(0,c-b)$ and $z\in(a-b,0)$, we have on the one hand,
\begin{eqnarray}
\Big|{\mathcal A}^{(\theta),n}(z,a,b,c,y)\Big|&=&\bigg|\frac{\mathbb{W}_n(c-b-y)}{\mathbb{W}_n(c-b)}\bigg(Z^{(\theta)}_n(z+b-a)-Z^{(\theta)}_n(b-a)\frac{W^{(\theta)}_n(z+b-a)}{W^{(\theta)}_n(b-a)}\bigg)\bigg|\notag\\
&\leq &\left|\int^0_{-z} \bigg(\theta W^{(\theta)}_n(u+b-a) -Z^{(\theta)}_n(b-a)\frac{W^{(\theta)\prime}_n(u+b-a)}{W_n^{(\theta)}(b-a)}\bigg){\rm d}u\right|,
\label{DOM}
\end{eqnarray}
where we have used the monotonicity of $\mathbb{W}_n$ in the inequality.
Next we recall, for example, from formulae (2.18) and (2.19) of \cite{KKR} that 
\begin{equation}
\frac{W^{(\theta)\prime}_n(u)}{W_n^{(\theta)} (u)} = \eta^{\Phi_n(\theta)}_n(\overline\epsilon\geq u) + \Phi_n(\theta)
\label{W'/W}
\end{equation}
almost everywhere, where $\eta^{\Phi_n(\theta)}_n$ is the excursion measure of $X^n$ under the exponential change of measure
\[
\left.\frac{{\rm d}\mathbb{P}^{\Phi_n(\theta),n}}{{\rm d}\mathbb{P}}\right|_{\mathcal{F}_t} = {\rm e}^{\Phi_n(\theta)X^n_t - \theta t}, \qquad t\geq 0,
\] 
with $\mathcal{F}_t = \sigma(X_s: s\leq t)$.
In particular, the right hand side of (\ref{W'/W}) is a non-increasing function. It is therefore straightforward to check, with the help of Lemma \ref{eta-convergence}  (note that the Laplace exponent of $(X^n, \mathbb{P}^{\Phi_n(\theta),n})$ is equal to $\psi_n(\cdot+ \Phi_n(\theta)) - \theta$ and this tends pointwise to $\psi(\cdot + \Phi(\theta))-\theta$ on $(0,\infty)$) that the integrand on the right hand side of (\ref{DOM}) is uniformly bounded. Hence 
there exists a constant $K>0$ such that for all $y,z$ and sufficiently large $n$,
\begin{equation}
\Big|{\mathcal A}^{(\theta),n}(z,a,b,c,y)\Big|\leq K|z| \mathbf{1}_{(a-b,0)}(z)\mathbf{1}_{(0,c-b)}(y)
\label{DOM1}
\end{equation}
which is integrable with respect to $\Pi({\rm d}z  -y){\rm d}y$ on $(0,\infty)^2$.
Indeed, to confirm the latter, noting that $\Pi$ is a finite measure away from the origin, it suffices to check, with the help of Fubini's Theorem, that for sufficiently small $\varepsilon>0$,
\begin{eqnarray}
\int_0^{\varepsilon} \int_{(-\varepsilon, 0)}(-z)\Pi({\rm d}z - y){\rm d}y &=& \int_0^{\varepsilon} \int_{(-\varepsilon-y, -y)}\mathbf{1}_{(-\varepsilon, 0)}(z)(-y-z)\Pi({\rm d}z ){\rm d}y \notag\\
&=& \int_{(-\varepsilon, 0)}\int_0^\infty \mathbf{1}_{(0,\varepsilon)}(y)\mathbf{1}_{(-z-\varepsilon, -z)}(y)(-y-z){\rm d}y\Pi({\rm d}z)\notag\\
&=& -\frac{1}{2}\int_{(-\varepsilon, 0)}\left.(z+y)^2\right|_{0}^{-z}\Pi({\rm d}z)\notag\\
&=& \frac{1}{2}\int_{(-\varepsilon, 0)}z^2\Pi({\rm d}z)<\infty.\label{recycle}
\end{eqnarray}
The Dominated Convergence
Theorem now implies  that
\[
\lim_{n\to\infty}\int_{(-\infty,0)}\int_0^{\infty}{\mathcal A}^{(\theta),n}(z,a,b, c,y)\Pi_n(\ud z-y)\ud y=\int_{(-\infty,0)}\int_0^{\infty}{\mathcal A}^{(\theta)}(z,a,b,c,y)\Pi(\ud z-y)\ud y.
\]

Now let us
check the integral in the denominator of (\ref{fii2}). Proceeding as before, we note that the functions
$\mathbb{W}_n,W^{(\theta)}_n$ are strictly increasing. Then we may find an upper bound for $\mathcal{B}^{(\theta),n}(z,a,b,c,y)$ as follows

\begin{equation}\label{cc4}
\begin{split}
\Big|\mathcal{B}^{(\theta)}(z,a,b,c,y)\Big|=\bigg|{\rm e}^{-\varphi_n(0)y}&-\frac{\mathbb{W}_n(c-b-y)}{\mathbb{W}_n(c-b)}\frac{W^{(\theta)}_n(z+b-a)}{W^{(\theta)}_n(b-a)}\mathbf{1}_{(a-b,0)}(z)\mathbf{1}_{(0,c-b)}(y)\bigg|\\
&\leq
1
+\frac{\mathbb{W}_n(c-b-y)}{\mathbb{W}_n(c-b)}\frac{W^{(\theta)}_n(z+b-a)}{W^{(\theta)}_n(b-a)}\\
&\leq 2.
\end{split}
\end{equation}
It follows that, provided we consider the part of the integral 
\begin{equation}
\int_0^{\infty}\int_{(-\infty,0)}\mathcal{B}^{(\theta),n}(z,a,b,c,y)\Pi_n(\ud z-y)\ud y
\label{theintegral}
\end{equation}
which concerns values of $y$ and $z$ which are bounded away from zero, we may appeal to 
dominated convergence to pass the limit in $n$ through the integral in the obvious way.

Let us therefore turn our attention to the part of (\ref{theintegral}) which concerns small values of $y$ and $z$. First note that we can write for $\varepsilon$ sufficiently small,
\begin{eqnarray*}
\int_0^{\varepsilon}\int_{(-\varepsilon,0)}\mathcal{B}^{(\theta),n}(z,a,b,c,y)\Pi_n(\ud z-y)\ud y
&=&\int_0^{\varepsilon}\int_{(-\varepsilon-y,-y)}\mathbf{1}_{(-\varepsilon,0)}(z)\mathcal{B}^{(\theta),n}(z+y,a,b,c,y)\Pi_n(\ud z)\ud y
\end{eqnarray*}
In the spirit of (\ref{DOM}) we can write 
\[
\mathcal{B}^{(\theta),n}(z+y,a,b,c,y)- \mathcal{B}^{(\theta),n}(z,a,b,c,0)  = \int_0^y \frac{\partial}{\partial u} \mathcal{B}^{(\theta),n}(z+u,a,b,c,u){\rm d}u,
\]
where the derivative is understood as a density with respect to Lebesgue measure (on account of the fact  that scale functions only have, in general, a derivative almost everywhere). 
Using similar reasoning to the derivation of the inequality (\ref{DOM1}) we may also deduce 
that, for $0<y,|z|<\varepsilon$ and $\varepsilon$ sufficiently small,
\[
\left|\mathcal{B}^{(\theta),n}(z+y,a,b,c,y)- \mathcal{B}^{(\theta),n}(z,a,b,c,0) \right|
\leq K_1y
\]
for some constant $K_1>0$. With similar reasoning we can also check that, within the same regime of $y$ and $z$,
\[
\left|\mathcal{B}^{(\theta),n}(z,a,b,c,0) \right|\leq K_2 |z|,
\]
where $K_2>0$ is a constant. We now claim that $K_1 y + K_2|z|$ is a suitable dominating function on $0<y,|z|<\varepsilon$. Taking account of the computations in (\ref{recycle}), to verify the last claim, it suffices to check that for $\varepsilon$ sufficiently small,
\begin{eqnarray*}
\int_0^{\varepsilon}\int_{(-\varepsilon-y,-y)}\mathbf{1}_{(-\varepsilon,0)}(z)y\Pi_n(\ud z)\ud y &=&\int_{(-\varepsilon,0)} \int_0^{\infty}y
\mathbf{1}_{(0,\varepsilon)}(y)\mathbf{1}_{(-z-\varepsilon,-z)}(y)\ud y\Pi_n(\ud z)\\
&=&\int_{(-\varepsilon,0)} \int_0^{\infty}y
\mathbf{1}_{(0,-z)}(y)\ud y\Pi_n(\ud z)\\
&=&\frac{1}{2} \int_{(-\varepsilon,0)}z^2\Pi({\rm d}z)<\infty.
\end{eqnarray*}

Thus far we have shown that the right hand side of (\ref{fii2}) converges to the desired expression. To deal with the left hand side of (\ref{fii2}), let us note that, according to the proof of Lemma
VII.23 in \cite{B},  we have
that $\mathbb{P}$-a.s.
\[
\lim_{n\to\infty}\rho^{+,n}_{a}=\rho^+_a,\qquad\text{
and}\qquad\lim_{n\to\infty}\rho^{-,n}_{c}=\rho^-_c.
\]
Finally using the uniform convergence of $U^n$ to $U$ on fixed, bounded intervals of time together with the
dominated convergence Theorem, we obtain
\[
\lim_{n\to\infty}\mathbb{E}_b\left[\exp\left\{-\theta\int_0^{\rho_{a}^{+,n}\wedge\rho_{c}^{-,n}}\mathbf{1}_{\{U^n_s<b\}}\ud s\right\}\right]=\mathbb{E}_b\left[\exp\left\{-\theta\int_0^{\rho_{a}^+\wedge\rho_{c}^-}\mathbf{1}_{\{U_s<b\}}\ud s\right\}\right].
\]
Hence taking limits in both sides of (\ref{fii2}) give us the desired  result. \hfill$\square$
\bigskip

Let us conclude this section by commenting on why we have excluded the case $\sigma^2 >0$. As we have seen above, we have taken account of the continuity of scale functions with respect to the underlying L\'evy triplet in taking limits through an approximating sequence of processes. Had the target L\'evy process $X$ included a Gaussian component, then, as we shall see in the next section, we would have found the appearance of derivatives of scale functions appearing in the limit on the right hand side of (\ref{fii2}). Whilst scale functions are continuous, they do not in general have continuous derivatives. Indeed for processes of bounded variation, in the case that the L\'evy measure has atoms, the associated scale functions are at best almost everywhere differentiable. For spectrally negative L\'evy processes of unbounded variation however, scale functions are continuously differentiable suggesting that it would be more convenient to deal the case of $\sigma^2>0$ by working with an approximating sequence of unbounded variation processes. This is precisely what we do in the next section.

\section{Proof of Theorem \ref{main}: unbounded variation paths, $\sigma^2>0$}\label{sigma>0}

We now consider the case when the driving L\'evy process $X$ has a Gaussian
component, i.e. $\sigma^2>0$. We will strongly
approximate $X$ by a sequence of L\'evy process of unbounded
variation $(X^n)_{n\ge 1}$ with no Gaussian component. Again  following the discussion on p210 of Bertoin \cite{B}, we may construct  the sequence $(X^n)_{n\ge 1}$
in such a way their  L\'evy measures $\Pi_n$ satisfy
\begin{equation}
\Pi_n(\ud x)=n^2\sigma^2\delta_{-\frac{1}{n}}(\ud x)+\Pi(\ud x),
\label{approx-Pi}
\end{equation}
where $\delta_z(\ud x)$ is the Dirac measure at $z$. In order to obtain this case, we first  prove a series of useful Lemmas.
\begin{lemma}\label{lema1}
Let $x>0$ and let $(x_n)_{n\ge 1}$ be a sequence of
positive real numbers such that $\lim_{n\to\infty}x_n=x$. Then
\[
W^{(\theta)}_n(x_n)\xrightarrow[n\to\infty]{} W^{(\theta)}(x).
\]
\end{lemma}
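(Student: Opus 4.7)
My plan is to reduce the statement to a pointwise convergence result together with a standard uniform convergence principle for monotone functions. The strategy is: (i) establish the pointwise convergence $W^{(\theta)}_n(y)\to W^{(\theta)}(y)$ for every fixed $y>0$; (ii) observe that $W^{(\theta)}$ is continuous (since $X$ has unbounded variation, being equipped with a nontrivial Gaussian component); (iii) use the classical fact that a sequence of monotone functions converging pointwise to a continuous limit converges uniformly on compact sets; (iv) combine with the continuity of $W^{(\theta)}$ to upgrade to the sequential convergence $W^{(\theta)}_n(x_n)\to W^{(\theta)}(x)$.

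For step (i), I would argue that $\psi_n(\theta)\to\psi(\theta)$ pointwise on $[0,\infty)$. Here the integral against $\Pi_n$ decomposes as a contribution from the Dirac atom $n^2\sigma^2\delta_{-1/n}$, which a short Taylor expansion gives
\[
n^2\sigma^2\bigl({\rm e}^{-\theta/n}-1+\theta/n\bigr)\xrightarrow[n\to\infty]{}\frac{\sigma^2}{2}\theta^2,
\]
plus the unchanged integral against $\Pi$; once the drift coefficients of the $X^n$ are chosen appropriately, this yields $\psi_n\to\psi$ pointwise on $(0,\infty)$. Then the Continuity Theorem for Laplace transforms applied to \eqref{scaleLT} (keeping in mind that $\Phi_n(q)\to\Phi(q)$) gives $W^{(\theta)}_n(y)\to W^{(\theta)}(y)$ at every point of continuity of $W^{(\theta)}$; but $W^{(\theta)}$ is continuous on $(0,\infty)$ because $X$ has unbounded variation, so pointwise convergence holds everywhere on $(0,\infty)$. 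This is essentially the argument already invoked in the previous section (cf.\ \eqref{recall}), so the details are routine.

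For step (ii)–(iii), I would invoke the standard lemma: if $(f_n)_{n\ge 1}$ are non-decreasing functions on an interval, $f_n\to f$ pointwise, and the limit $f$ is continuous, then the convergence is uniform on each compact subinterval. Since each $W^{(\theta)}_n$ is non-decreasing and $W^{(\theta)}$ is continuous on $(0,\infty)$, we conclude that $W^{(\theta)}_n\to W^{(\theta)}$ uniformly on compact subintervals of $(0,\infty)$. Fix a compact neighbourhood $K\subset(0,\infty)$ of $x$; for $n$ sufficiently large, $x_n\in K$ and
\[
\bigl|W^{(\theta)}_n(x_n)-W^{(\theta)}(x)\bigr|\le \sup_{y\in K}\bigl|W^{(\theta)}_n(y)-W^{(\theta)}(y)\bigr|+\bigl|W^{(\theta)}(x_n)-W^{(\theta)}(x)\bigr|.
\]
The first term goes to $0$ by uniform convergence on $K$, and the second by continuity of $W^{(\theta)}$, proving the claim.

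I do not anticipate a serious obstacle here: the only point of subtlety is the justification of pointwise convergence of the scale functions from the convergence of Laplace exponents, which is a standard application of the continuity theorem for Laplace transforms and has already been deployed in the preceding section. The uniform-on-compacts upgrade is a textbook consequence of monotonicity and continuity of the limit.
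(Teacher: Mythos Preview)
Your proof is correct and takes a genuinely different route from the paper's argument. The paper proceeds via the exponential change-of-measure decomposition $W^{(\theta)}_n(x)={\rm e}^{\Phi_n(\theta)x}W_{\Phi_n(\theta)}(x)$ together with the excursion-theoretic representation
\[
W_{\Phi_n(\theta)}(x_n)=W_{\Phi_n(\theta)}(a)\exp\Bigl\{-\int_{x_n}^{a}\eta_n(\bar{\epsilon}\geq t)\,\ud t\Bigr\},
\]
and then passes the limit through the integral by appealing to Lemma~\ref{eta-convergence} and dominated convergence. Your approach bypasses the excursion machinery entirely: once the pointwise convergence $W^{(\theta)}_n(y)\to W^{(\theta)}(y)$ is secured (which, as you note, is already established via the Continuity Theorem for Laplace transforms), the P\'olya-type lemma upgrading pointwise convergence of monotone functions to uniform convergence on compacts, combined with continuity of the limit, delivers the conclusion immediately. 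Your argument is more elementary and self-contained, and in fact does not require that the approximating $X^n$ have paths of unbounded variation. The paper's route, on the other hand, keeps the excursion representation in play, which is natural given that the subsequent lemma (convergence of $W^{(\theta)\prime}_n$) is handled through the same formula $W^{(\theta)\prime}_n/W^{(\theta)}_n=\eta^{\Phi_n(\theta)}_n(\bar\epsilon\geq\cdot)+\Phi_n(\theta)$; so the paper's method reuses ingredients it needs anyway, whereas yours introduces an independent tool.
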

\begin{proof}
First note from Lemma 8.4 in \cite{K} that we can write
\begin{equation}
W^{(\theta)}_n(x) = {\rm e}^{\Phi_n(\theta)x}W_{\Phi_n(\theta)}(x), \qquad x\geq 0,
\label{Wcom}
\end{equation}
where $W_{\Phi_n(\theta)}$ is the scale function of the process $(X, \mathbb{P}^{\Phi_n(\theta),n})$.
As earlier noted, Lemma 8.2. of \cite{K} tells us that we may further write 
\[
W_{\Phi_n(\theta)}(x_n)=W_{\Phi_n(\theta)}(a)\exp\left\{-\int_{x_n}^{a}\eta_n(\bar{\epsilon}\geq
t)\ud t\right\}.
\]

Now appealing to Lemma \ref{eta-convergence} and (\ref{recall}), it is straightforward to show with the help of dominated convergence that 
\[
\begin{split}
\lim_{n\to\infty}W_{\Phi_n(\theta)}(x_n)&=\lim_{n\to\infty}W_{\Phi_n(\theta)}(a)\exp\left\{-\int_{x_n}^{a}\eta_n(\bar{\epsilon}\geq
t)\ud t\right\}\\
&=W_{\Phi(\theta)}(a)\exp\left\{-\int_{x}^{a}\eta(\bar{\epsilon}\geq
t)\ud t\right\}\\
&=W_{\Phi(\theta)}(x).
\end{split}
\]
Applying this limit to (\ref{Wcom}) completes the proof. 
\end{proof}

\begin{lemma}
Let $x>0$ and let $(x_n)_{n\ge 1}$ be a sequence of
positive real numbers such that $\lim_{n\to\infty}x_n=x$. Then for
any $\theta>0$
\[
W^{(\theta)\prime}_n(x_n)\xrightarrow[n\to\infty]{} W^{(\theta)\prime}(x).
\]
\end{lemma}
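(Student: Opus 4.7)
The plan is to reduce the convergence of derivatives to the convergence of scale functions (already handled in the previous lemma) plus the convergence of excursion measures (handled in Lemma \ref{eta-convergence}), via the identity recalled in \eqref{W'/W} from \cite{KKR}. Because the target process $X$ has unbounded variation paths (either by $\sigma^2>0$ or by construction in this section) and each approximant $X^n$ is chosen of unbounded variation with no Gaussian component, both $W^{(\theta)}$ and $W_n^{(\theta)}$ are \emph{continuously} differentiable on $(0,\infty)$, so the identity
\[
\frac{W^{(\theta)\prime}_n(u)}{W_n^{(\theta)}(u)} = \eta^{\Phi_n(\theta)}_n(\overline\epsilon\geq u) + \Phi_n(\theta)
\]
holds pointwise for every $u>0$, and similarly without the subscript $n$. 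Rearranging,
\[
W^{(\theta)\prime}_n(x_n) = W^{(\theta)}_n(x_n)\bigl(\eta^{\Phi_n(\theta)}_n(\overline\epsilon\geq x_n) + \Phi_n(\theta)\bigr),
\]
so it suffices to control each factor on the right under $x_n\to x$.

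The first factor converges to $W^{(\theta)}(x)$ by the previous lemma. For the constant $\Phi_n(\theta)$, the pointwise convergence $\psi_n\to\psi$ on $(0,\infty)$, together with strict convexity and $\psi_n(\infty)=\infty$, gives $\Phi_n(\theta)\to\Phi(\theta)$. The substantive step is the convergence of the excursion term. I would apply Lemma \ref{eta-convergence} to the Esscher-transformed processes $(X^n,\mathbb{P}^{\Phi_n(\theta),n})$ and $(X,\mathbb{P}^{\Phi(\theta)})$, whose Laplace exponents are $\psi_n(\cdot+\Phi_n(\theta))-\theta$ and $\psi(\cdot+\Phi(\theta))-\theta$ respectively; pointwise convergence on $(0,\infty)$ is inherited from $\psi_n\to\psi$ and $\Phi_n(\theta)\to\Phi(\theta)$ together with continuity of $\psi$, and all of these are Laplace exponents of spectrally negative L\'evy processes. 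Lemma \ref{eta-convergence} then yields
\[
\eta^{\Phi_n(\theta)}_n(\overline\epsilon\geq z)\xrightarrow[n\to\infty]{} \eta^{\Phi(\theta)}(\overline\epsilon\geq z), \qquad z>0.
\]

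The one technical obstacle is that our evaluation point $x_n$ is moving, whereas Lemma \ref{eta-convergence} only gives pointwise convergence in $z$. This is handled by a standard monotone-sandwich argument: since the Esscher-transformed $X$ also has unbounded variation paths (the Gaussian component is preserved under the change of measure when $\sigma^2>0$, and likewise the limiting process remains of unbounded variation in the case $\sigma^2=0$), its scale function is continuously differentiable, so the limit map $z\mapsto \eta^{\Phi(\theta)}(\overline\epsilon\geq z)$ is continuous. Given $\delta>0$ and $n$ large enough that $x_n\in(x-\delta,x+\delta)$, the monotonicity of $\eta^{\Phi_n(\theta)}_n(\overline\epsilon\geq\cdot)$ yields
\[
\eta^{\Phi_n(\theta)}_n(\overline\epsilon\geq x+\delta)\leq \eta^{\Phi_n(\theta)}_n(\overline\epsilon\geq x_n)\leq \eta^{\Phi_n(\theta)}_n(\overline\epsilon\geq x-\delta);
\]
passing $n\to\infty$ at the fixed points $x\pm\delta$ via Lemma \ref{eta-convergence} and then letting $\delta\downarrow 0$ by continuity of the limit gives $\eta^{\Phi_n(\theta)}_n(\overline\epsilon\geq x_n)\to \eta^{\Phi(\theta)}(\overline\epsilon\geq x)$. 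Multiplying the three factor-convergences together, and using the analogous identity for $W^{(\theta)\prime}(x)/W^{(\theta)}(x)$, delivers $W^{(\theta)\prime}_n(x_n)\to W^{(\theta)\prime}(x)$, as required.
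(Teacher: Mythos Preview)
Your proof is correct and follows essentially the same route as the paper: write $W^{(\theta)\prime}_n(x_n)=W^{(\theta)}_n(x_n)\bigl(\eta^{\Phi_n(\theta)}_n(\overline\epsilon\geq x_n)+\Phi_n(\theta)\bigr)$ via \eqref{W'/W}, invoke the previous lemma for the first factor, and appeal to Lemma~\ref{eta-convergence} for the excursion term. The paper's proof simply asserts that the excursion convergence at the moving point $x_n$ ``follows from Lemma~\ref{eta-convergence}''; your monotone-sandwich argument using continuity of $z\mapsto\eta^{\Phi(\theta)}(\overline\epsilon\geq z)$ makes explicit the step the paper leaves implicit.
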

\begin{proof}
From (\ref{W'/W}) and the previous lemma, it suffices to show that 
\[
\eta^{\Phi_n(\theta)}_n(\overline{\epsilon}>x_n)\xrightarrow[n\to\infty]{} \eta^{\Phi(\theta)}(\overline{\epsilon}>x).
\]
However, this follows from Lemma \ref{eta-convergence}.
\end{proof}

Let us return to the proof of Theorem \ref{main} for the case of unbounded variation paths with $\sigma^2>0$ using the approximating sequence of bounded variation spectrally negative L\'evy processes constructed with L\'evy measure given by (\ref{approx-Pi}).  Starting from (\ref{fii2}), and taking account of the reasoning in the previous section, it suffices to show that the right hand side of (\ref{fii2}) converges to the ratio
\begin{equation}\label{ui}
\frac{\displaystyle\frac{1}{\mathbb{W}(c-b)}+\displaystyle\frac{\sigma^2}{2}\mathcal{C}^{(\theta)}(a,b)+\int_{-\infty}^0\int_0^{\infty}{\mathcal A}^{(\theta)}(z,a,b,c,y)\Pi(dz-y){\rm d}y}{(\psi'(0+)-\delta)^++\displaystyle\frac{\sigma^2}{2}\mathcal{D}^{(\theta)}(a,b,c)+\int_{-\infty}^0\int_0^{\infty}\mathcal{B}^{(\theta)}(z,a,c,y)\Pi(dz-y){\rm d}y}.
\end{equation}

Proceeding exactly as in the proof of Theorem 6, it is easy to see
that
\begin{equation}
\lim_{n\to\infty}\int_{-\infty}^0\int_0^{\infty}{\mathcal A}^{(\theta),n}(z,a,b,c,y)\Pi_n(dz-y){\rm d}y=\int_{-\infty}^0\int_0^{\infty}{\mathcal A}^{(\theta)}(z,a,b,c,y)\Pi(dz-y){\rm d}y.\notag
\end{equation}
and
\begin{equation}
\lim_{n\to\infty}\int_{-\infty}^0\int_0^{\infty}\mathcal{B}^{(\theta),n}(z,a,b,c,y)\Pi_n(dz-y){\rm d}y=\int_{-\infty}^0\int_0^{\infty}{\mathcal A}^{(\theta)}(z,a,b,c,y)\Pi(dz-y){\rm d}y.\notag
\end{equation}

To obtain the terms $\sigma^2\mathcal{C}^{(\theta)}(a,b)/2$ in the numerator, consider the 
integral in the numerator of the right hand side of (\ref{ui}). We have
\begin{eqnarray*}
\lefteqn{n^2\sigma^2\int_0^{\infty}\int_{(-\infty,0)}{\mathcal A}^{(\theta),n}(z,a,b,c,y)\delta_{-1/n}({\rm d}z-y){\rm d}y}&&\\
&&=n^2\sigma^2\int_{0}^{\infty}\int_{(-\infty,-y)}{\mathcal A}^{\theta,n}(z+y,a,b,c,y)\delta_{-1/n}({\rm d}z){\rm d}y\notag\\
&&=n^2\sigma^2\int_{0}^{1/n}\frac{\mathbb{W}_n(c-b-y)}{\mathbb{W}_n(c-b)}\left(Z^{(\theta)}_n(y-1/n+b-a)-Z^{(\theta)}_n(b-a)\frac{W^{(\theta)}_n(y-1/n+b-a)}{W^{(\theta)}_n(b-a)}\right){\rm d}y.\notag
\end{eqnarray*}
It follows that 
\begin{eqnarray*}
\lefteqn{
n^2\sigma^2\int_0^{\infty}\int_{(-\infty,0)}{\mathcal A}^{(\theta),n}(z,a,b,c,y)\delta_{-1/n}({\rm d}z-y){\rm d}y}&& 
\\
&=&n^2\sigma^2\int_{0}^{1/n}\frac{\mathbb{W}_n(c-b-y)}{\mathbb{W}_n(c-b)}\left(Z^{(\theta)}_n(y-1/n+b-a)-Z^{(\theta)}_n(b-a)\frac{W^{(\theta)}_n(y-1/n+b-a)}{W^{(\theta)}_n(b-a)}\right){\rm d}y\\
&=&n^2\sigma^2\int_{0}^{1/n}y\frac{\mathbb{W}'_n(c-b-y)}{\mathbb{W}_n(c-b)}\left(Z^{(\theta)}_n(y-1/n+b-a)-Z^{(\theta)}_n(b-a)\frac{W^{(\theta)}_n(y-1/n+b-a)}{W^{(\theta)}_n(b-a)}\right){\rm d}y\notag\\
&&-n^2\sigma^2\int_{0}^{1/n}y\frac{\mathbb{W}_n(c-b-y)}{\mathbb{W}_n(c-b)}\left(\theta
W^{(\theta)}_n(y-1/n+b-a)-Z^{(\theta)}_n(b-a)\frac{W^{(\theta)\prime}_n(y-1/n+b-a)}{W^{(\theta)}_n(b-a)}\right){\rm d}y\notag\\
&=&\sigma^2\int_{0}^{1}u\frac{\mathbb{W}'_n(c-b-u/n)}{\mathbb{W}_n(c-b)}\left(Z^{(\theta)}_n((u-1)/n+b-a)-Z^{(\theta)}_n(b-a)\frac{W^{(\theta)}_n((u-1)/n+b-a)}{W^{(\theta)}_n(b-a)}\right){\rm d}u\notag\\
&&-\sigma^2\int_{0}^{1}y\frac{\mathbb{W}_n(c-b-u/n)}{\mathbb{W}_n(c-b)}\left(\theta
W^{(\theta)}_n((u-1)/n+b-a)-Z^{(\theta)}_n(b-a)\frac{W^{(\theta)\prime}_n((u-1)/n+b-a)}{W^{(\theta)}_n(b-a)}\right){\rm d}u\notag,
\end{eqnarray*}
where, in the second equality we have integrated by parts and in the third equality we have made the change of variable $u = ny$.

Appealing to the  Dominated Convergence Theorem
it now follows that 
\begin{eqnarray*}
\lefteqn{\hspace{-2cm}
\lim_{n\uparrow\infty} n^2\sigma^2\int_0^{\infty}\int_{(-\infty,0)}{\mathcal A}^{(\theta),n}(z,a,b,c,y)\delta_{-1/n}({\rm d}z-y){\rm d}y
} &&\\
 &=& \sigma^2\left(Z^{(\theta)}(b-a)\frac{W^{(\theta)\prime}(b-a)}{W^{(\theta)}(b-a)}- \theta W^{(\theta)}(b-a)\right)\int_{0}^{1}y{\rm d}y\\
&=&\frac{\sigma^2}{2}\left(Z^{(\theta)}(b-a)\frac{W^{(\theta)\prime}(b-a)}{W^{(\theta)}(b-a)}- \theta
W^{(\theta)}(b-a)\right)\\
&=&\frac{ \sigma^2}{2}\mathcal{C}^{(\theta)}(a,b).
\end{eqnarray*}

A similar computation, left to the reader, will reveal 
\begin{align}
\lim_{n\to\infty}n^2\sigma^2\int_0^{\infty}\int_{(-\infty,0)}&\mathcal{B}^{\theta,n}(z,a,b,c,y)\delta_{-1/n}(dz-y){\rm d}y=\frac{\sigma^2}{2}\left(\frac{\mathbb{W}'(c-b)}{\mathbb{W}(c-b)}+\frac{W^{(\theta)\prime}(b-a)}{W^{(\theta)}(b-a)}-\varphi(0)\right)\notag.
\end{align}
This concludes the proof of Theorem \ref{main}.\hfill$\square$

\section{Proofs of Corollaries \ref{corr1} and \ref{corr2}}

These corollaries are the result of taking limits as $c\uparrow\infty$ and $a\downarrow-\infty$ in the expression (\ref{ui}). In dealing with the left hand side of (\ref{ui}) one appeals to dominated convergence and the monotonicity of the stopping times $\rho^+_c$ and $\rho^-_a$ in their respective parameters. For the right hand side, one may make use of the  limits given below, together with dominated convergence. The  implementation of the Dominated Convergence Theorem is similar to the arguments used above, and we omit them for the sake of brevity, leaving the details to the reader.

We first note that for all $z\in (-\infty,0)$,
\[
\lim_{a\downarrow-\infty}Z^{(\theta)}(z+b-a)-Z^{(\theta)}(b-a)\frac{W^{(\theta)}(z+b-a)}{W^{(\theta)}(b-a)}=\lim_{a\downarrow-\infty}\mathbb{E}_{z+b}\left[{\rm e}^{-\theta\tau_a^-};\tau_a^-<\tau_{b}^+\right]=0,
\]
which implies
\[
\lim_{a\downarrow-\infty}{\mathcal A}^{(\theta)}(z,a,b,c,y)=0.
\]
On the other hand,  recall
\[
\lim_{a\downarrow-\infty}\frac{W^{(\theta)}(z+b-a)}{W^{(\theta)}(b-a)}={\rm e}^{\Phi(\theta)z},
\]
and from (\ref{W'/W})
\[
\lim_{a\downarrow-\infty}\frac{W^{(\theta)\prime}(b-a)}{W^{(\theta)}(b-a)} = \Phi(\theta).
\]
Hence 
\[
\lim_{a\downarrow-\infty}\mathcal{B}^{(\theta)}(z, a,b, c,y)={\rm e}^{-\varphi(0)y}-{\rm e}^{\Phi(\theta)z}\frac{\mathbb{W}(c-b-y)}{\mathbb{W}(c-b)}\mathbf{1}_{(a-b,0)}(z)\mathbf{1}_{(0,c-b)}(y).
\]

\[
\lim_{a\downarrow-\infty}\mathcal{D}^{(\theta)}(a,b,c)=\frac{\mathbb{W}'(c-b)}{\mathbb{W}(c-b)}+\Phi(\theta)-\varphi(0).
\]
Moreover, appealing to Theorem 2.8 (ii) in \cite{KKR}, we have 
\[
\lim_{a\downarrow-\infty}\mathcal{C}^{(\theta)}(a,b)  =  \lim_{a\downarrow-\infty}\mathbb{E}_0({\rm e}^{-\theta\sigma_{b-a}} ) = 0,
\]
where $\sigma_a = \inf\{t>0 : \overline{X}_t - X_t >a\}$.
From these limits we can easily deduce the statement in part (i) of Corollary \ref{corr1}.

Next recall, see for instance Exercise 8.5 in \cite{K}, that
\[
\lim_{z\uparrow\infty}\frac{\mathbb{W}(z-x)}{\mathbb{W}(z)}={\rm e}^{-\varphi(0)x}.
\]
From identity (8.7) in \cite{K}, we deduce
\[
\mathbb{W}(\infty)=\begin{cases}
\displaystyle\frac{1}{\psi^\prime(0+)-\delta}&\textrm{ if } \psi^\prime(0+)>\delta,\\
\infty &\textrm{ if }  \psi^\prime(0+)\le \delta.
\end{cases}
\]
Moreover, from the relation for $\mathbb{W}'(z)/\mathbb{W}(z)$ that is analogous to (\ref{W'/W}) we have
\[
\lim_{z\uparrow\infty}\frac{{\mathbb W}'(z)}{\mathbb{W}(z)} = \varphi(0).
\]
Hence, when $c$ goes to  $\infty$, we get
\[
\lim_{c\uparrow\infty}{\mathcal A}^{(\theta)}(z,a,b,c,y)=
{\rm e}^{-\varphi(0)y}\left(Z^{(\theta)}(z+b-a)-Z^{(\theta)}(b-a)\frac{W^{(\theta)}(z+b-a)}{W^{(\theta)}(b-a)}\right)\mathbf{1}_{(a-b,0)}(z)
\]
\[
\lim_{c\uparrow\infty}\mathcal{B}^{(\theta)}(z, a,b, c,y)=
{\rm e}^{\varphi(0)y}\left(1-\frac{W^{(\theta)}(z+b-a)}{W^{(\theta)}(b-a)}\mathbf{1}_{(a-b,0)}(z)\right)
\]
and
\[
\lim_{c\uparrow\infty}\mathcal{D}^{(\theta)}(a,b,c)=\frac{W^{(\theta)\prime}(b-a)}{W^{(\theta)}(b-a)}.
\]
From these limits we can deduce the statement in part (ii) of Corollary \ref{corr1}.

\bigskip

Finally, for the proof of Corollary \ref{corr2},  one may proceed as above, taking limits in either of the two expressions given in Corollary \ref{corr1} (respectively as $c\uparrow\infty$ in part (i) or $a\downarrow-\infty$ in part (ii)), however one may also recover the identity by revisiting the proof of Theorem \ref{main}. Indeed, setting $c = +\infty$ and $a  = -\infty$, in which case one should understand $p = \mathbb{E}_b (\zeta<\infty)$, the proof goes through verbatim for the case that $X$ has bounded variation paths. Appealing to the approximation (\ref{uniformBertoin}), the identity (\ref{totaloccupation}) can easily be shown to be valid in the case that $X$ has unbounded variation case.

In order to obtain the density of  $\int_0^{\infty}\mathbf{1}_{\{U_s<b\}}\ud s$,   we first  recall that $\Phi$ is the Laplace exponent of the ascending ladder time of $X$,  which is a subordinator (see Theorem VII.1 in \cite{B}). Therefore $\Phi$ respects the relation
  \[
\frac{\Phi(\theta)}{\theta} =\texttt{a}+\int_0^\infty {\rm e}^{-\theta x}\overline{\Pi}_{\Phi}(x)\ud x,
 \]
 where $\texttt{a}\ge 0$, $\overline{\Pi}_{\Phi}(x)=\Pi_{\Phi}([x,\infty))$ and $\Pi_\Phi$ is the underlying L\'evy measure associated to $\Phi$.   Said another way, the quantity $\Phi(\theta)/\theta$, for $\theta\geq 0$ is the Laplace transform of the measure 
 \[
\mu({\rm d}x) =\texttt{a}\delta_{0}({\rm d}x) + \overline{\Pi}_{\Phi}(x){\rm d}x, \qquad x\geq 0,
 \]
 where $\delta_0({\rm d}x)$ is the Dirac-delta measure which places an atom at zero.
 
 Now rewriting  (\ref{totaloccupation}) and noting that  under the assumption $\psi^\prime(0+)>\delta$,  we necessarily have $\delta\Phi(\theta)<\theta$, we get
\[
\begin{split}
\mathbb{E}_b\bigg[\exp\bigg\{-\theta\int_0^{\infty}\mathbf{1}_{\{U_s<b\}}\ud s\bigg\}\bigg]
&= (\psi'(0+) - \delta)\frac{\Phi(\theta)}{\theta}\times\frac{1}{1 - \displaystyle\frac{\delta\Phi(\theta)}{\theta}}\\
&= \left(\frac{\psi'(0+)}{\delta} - 1\right)
\sum_{n\ge1}\delta^n\left(\frac{\Phi(\theta)}{\theta}\right)^n.
\end{split}
\]
We deduce that $\int_0^{\infty}\mathbf{1}_{\{U_s<b\}}\ud s$ has a  density which is given by
\[
\left(\frac{\psi'(0+)}{\delta} - 1\right)
\sum_{n\ge1}\delta^n\mu^{*n}({\rm d}x), \qquad x\geq 0,
\]
where $\mu^{*n}$ is the $n$-fold
convolution of $\mu$. Note that $\mu^{*n}(\{0\}) = \texttt{a}^n$ and hence, with $\nu({\rm d}x) = \overline\Pi_{\Phi}(x){\rm d}x$, we finally come to rest at 
\[
\mathbb{P}_b\left(\int_0^{\infty}\mathbf{1}_{\{U_s<b\}}\ud \in{\rm d}x\right)= 
\left(\frac{\psi'(0+)}{\delta} - 1\right)\left(\frac{\delta\texttt{a}}{1- \delta\texttt{a}}\delta_0({\rm d}x) + \mathbf{1}_{\{x>0\}}\sum_{n\geq 1}\delta^n\nu^{*n}({\rm d}x)\right),
\]
where $\nu^{*n}$ is the $n$-fold convolution of $\nu$.
\hfill$\square$

\section*{Acknowledgement}

J-L.P. acknowledges financial support from CONACyT grant number 129326.
J-C. P. acknowledges financial support from CONACyT grant number 128896
A.E.K acknowledges financial support from the Santander Research Fund.


\begin{thebibliography}{99}




\bibitem{APP2007}\sc Avram, F., Palmowski, Z. and Pistorius, M.R. \rm  On the optimal dividend problem for a spectrally negative L\'evy process. {\it Ann. Appl.Probab.} {\bf 17}, 156-180, (2007).

\bibitem{B} \sc Bertoin, J. {\it L\'evy processes. }\rm Cambridge University Press, Cambridge, (1996).

\bibitem{CT}\sc Chow, Y.S., Teicher, H. {\it Independence
interchangeability martingales. }\rm Springer-Verlag, New Yok, (1978).


\bibitem{F1998}
\sc Furrer, H.  \rm Risk processes perturbed by $\alpha$-stable L\'evy motion. {\it Scand. Actuar. J.} 59--74, (1998).


\bibitem{HPSV2004a}\sc Huzak, M., Perman, M., \v{S}iki\'c, H. and Vondra\v cek, Z. \rm Ruin probabilities and decompositions for general perturbed risk  processes.  {\it Ann. Appl. Probab.} {\bf 14}, 1378--1397, (2004).

\bibitem{HPSV2004b} \sc Huzak, M., Perman, M., \v{S}iki\'c, H. and Vondra\v cek, Z. \rm Ruin probabilities for competing claim processes.  {\it J. Appl. Probab.} {\bf 41}, 679--690, (2004).

\bibitem{KKR}\sc  Kuznetsov, A., Kyprianou, A.E., Rivero, V. \rm The
theory of scale functions for spectrally negative L\'evy processes. {\it To appear in L\'evy Matters, Springer}, (2011).





\bibitem{KKM2004} \sc Kl\"uppelberg, C.,  Kyprianou, A.E. and  Maller, R.A. \rm Ruin probabilities and overshoots for general L\'evy insurance risk  processes.  {\it Ann. Appl. Probab.} {\bf 14}, 1766--1801, (2004).

\bibitem{KK2006} \sc Kl\"uppelberg, C. and  Kyprianou, A.E. \rm On extreme ruinous behaviour of L\'{e}vy insurance risk processes.
{\it J. Appl. Probab.} {\bf 43(2)}, 594--598, (2006).


\bibitem{K} \sc Kyprianou, A.E. {\it Introductory lectures on
fluctuations of L\'evy processes with applications.} \rm Springer,
Berlin, (2006).


\bibitem{KL}\sc Kyprianou, A.E., Loeffen, R. \rm Refracted L\'evy
processes. {\it Ann. Inst. H. Poincar\'e,} {46} (1), 24--44, (2010).



\bibitem{KP2007} \sc Kyprianou, A.E. and Palmowski, Z. \rm Distributional study of de Finetti's dividend problem for a general L\'evy insurance risk process. {\it J. Appl. Probab.} {\bf 44}, 349-365, (2007)

\bibitem{LRZ-0}\sc Landriault, D., Renaud, J-F. and Zhou, X. \rm Insurance risk models with Parisian implementation delays, \texttt{ssrn.com/abstract=1744193}.


\bibitem{LRZ}\sc Landriault, D., Renaud, J-F. and Zhou, X. \rm
Occupation times of spectrally negative L\'evy processes with applications.  {\it Stochastic Process. Appl.}  { 121},  2629--2641, (2011).

\bibitem{LSZ} \sc Lambert, A., Simatos, F., Zwart, B. \rm Scaling
limits via excursion theory: interplay between Crump-Mode-Jagers
branching processes and processor-sharing queues. {\it Submitted}, (2011).

\bibitem{RZ2007} \sc Renaud, J-F. and Zhou, X. \rm  Distribution of the dividend payments in a general L\'evy risk model.  {\it  J. Appl. Probab.} {\bf 44},  420--427, (2007).

\bibitem{SV2007} \sc Song, R. and  Vondra\v{c}ek, Z. \rm  On suprema of L\'evy processes and application in risk theory. {\it Ann.  lnst. H. Poincar\'e}, {\bf 44}, 977--986, (2008).



\end{thebibliography}
\end{document}